\pgfplotsset{compat=1.15}
\tikzset{
  centered/.style = { align=center, anchor=center },
     empty/.style = { font=\sffamily\Large, centered, text width=2cm },
       box/.style = { font=\sffamily, fill=green, centered },
    result/.style = { font=\sffamily\scriptsize, fill=black!20, centered},
     arrow/.style = { very thick, color=red, ->, >=Triangle},
}
\newcommand\reallywidehat[1]{%
\savestack{\tmpbox}{\stretchto{%
  \scaleto{%
    \scalerel*[\widthof{\ensuremath{#1}}]{\kern-.6pt\bigwedge\kern-.6pt}%
    {\rule[-\textheight/2]{1ex}{\textheight}}
  }{\textheight}%
}{0.5ex}}%
\stackon[1pt]{#1}{\tmpbox}%
}
\newtheorem{Theorem}{Theorem}[section]
\newtheorem{Lemma}[Theorem]{Lemma}
\newtheorem{Corollary}[Theorem]{Corollary}
\newtheorem{Definition}[Theorem]{Definition}
\newtheorem{Remark}[Theorem]{Remark}
\newcommand{\M}{\mathbb{M}}
\newcommand{\Mhk}{{\M}_h^k}
\newcommand{\Mk}{{\M}^k}
\newcommand{\bfw}{\mathbf{w}}
\newcommand{\bfe}{\mathbf{e}}
\newcommand{\detJ}{J}
\newcommand{\Jacob}{{\mathbb F}}
\newcommand{\xx}{x_1}
\newcommand{\xy}{x_2}
\newcommand{\xref}{\widehat{x}}
\newcommand{\xrefx}{\xref_1}
\newcommand{\xrefy}{\xref_2}
\newcommand{\eu}{e_\vu}
\newcommand{\euk}{e_\vu^k}
\newcommand{\rmS}{{\rm S}}
\newcommand{\rmT}{{\rm T}}
\newcommand{\bfphi}{\boldsymbol{\varphi}}
\newcommand{\hbfphi}{\widehat{\bfphi}}
\newcommand{\hq}{\widehat{q}}
\newcommand{\bftau}{\mathbb{T}}
\newcommand{\hbftau}{\widehat{\bftau}}
\newcommand{\summ}{\sum_{k=1}^m}
\newcommand{\sumN}{\sum_{k=1}^N}
\newcommand{\Div}{{\rm div}\,}
\newcommand{\Grad}{\nabla}
\newcommand{\Gradref}{\Grad_{\xref}}
\newcommand{\Laph}{ \pd_{\xx,h}^2 }
\newcommand{\pdx}{ \pd_{\xx}}
\newcommand{\Lapx}{ \pdx^2}
\newcommand{\dvolref}{\,{\rm d}\xref}
\newcommand{\ds}{\,{\rm d}\xx}
\newcommand{\Abs}[1]{ \left| #1 \right|}
\newcommand{\abs}[1]{ | #1 |}
\newcommand{\norm}[1]{\left\lVert#1\right\rVert}
\newcommand{\aleq}{\stackrel{<}{\sim}}
\newcommand{\vr}{\varrho}
\newcommand{\vrf}{\vr_f}
\newcommand{\vrs}{\vr_s}
\newcommand{\vu}{\mathbf{u}}
\newcommand{\txi}{\widetilde{\xi}}
\newcommand{\vw}{\bfw}
\newcommand{\ve}{\bfe}
\newcommand{\er}{{\ve}_2}
\newcommand{\hvu}{\widehat{\vu}}
\newcommand{\hvw}{\widehat{\vw}}
\newcommand{\hvv}{\widehat{\vv}}
\newcommand{\hp}{\widehat{p}}
\newcommand{\FSIref}{\widehat{W}}
\newcommand{\hFSIh}{\widehat{W}_{\eta_h}}
\newcommand{\vc}[1]{{\bf #1}}
\newcommand{\vv}{\vc{v}}
\newcommand{\I}{\mathbb{I}}
\newcommand{\Id}{\I}
\newcommand{\R}{\mathbb{R}}
\definecolor{Cgrey}{rgb}{0.85,0.85,0.85}
\definecolor{Cblue}{rgb}{0.50,0.85,0.85}
\definecolor{Cred}{rgb}{1,0,0}
\definecolor{fancy}{rgb}{0.10,0.85,0.10}
\definecolor{forestgreen}{rgb}{0.13, 0.55, 0.13}
\newcommand{\vx}{{\bm x}}
\newcommand{\vxref}{\widehat{\bm x}}
\newcommand{\TS}{\tau}
\newcommand{\pd}{\partial}
\newcommand{\pdt}{\pd _t}
\newcommand{\pdtt}{\pd _t^2}
\newcommand{\PDt}{D_t}
\newcommand{\Qfh}{Q^f_h}
\newcommand{\Vsh}{V^s_h}
\newcommand{\Vshz}{V^s_{0,h}}
\newcommand{\hQfh}{\widehat{Q}^f_h}
\newcommand{\hVfh}{\widehat{V}^f_h}
\newcommand{\ALE}{\mathcal{A}_{\eta}}
\newcommand{\intS}[1] {\int_\Sigma #1 \ds }
\newcommand{\intSB}[1] {\int_\Sigma \left( #1 \right) \ds }
\newcommand{\Of}{\Omega_\eta}
\newcommand{\Ofh}{\Omega_{\eta_h}}
\newcommand{\intOref}[1]{\int_{\Oref} #1 \dvolref}
\newcommand{\intOrefB}[1]{\int_{\Oref}\left( #1 \right)\dvolref}
\newcommand{\Oref}{{\widehat{\Omega}}}
\newcommand{\gridf}{\mathcal{T}_h}
\newcommand{\grids}{ \Sigma_h}
\newcommand{\calP}{ \mathcal{P}}
\newcommand{\calL}{ \mathcal{L}}
\newcommand{\PiF}{{\Pi_h^f}}
\newcommand{\Piq}{ \Pi_h^Q}
\newcommand{\Riesz}{{\mathcal{R}_h^s}}
\begin{document}


\title{Stability and error estimates of a linear and partitioned finite element method approximating nonlinear fluid--structure interactions
}

\author{
\and Bangwei She\thanks{Academy for Multidisciplinary Studies, Capital Normal University
(bangweishe@cnu.edu.cn).
} 
\and Tian Tian\thanks{School of Mathematics, Statistics and Mechanics, Beijing University of Technology (tiantian@bjut.edu.cn)}$^{\dagger}$
\and Karel T\r{u}ma\thanks{Mathematical Institute, Faculty of Mathematics and Physics, Charles University (ktuma@karlin.mff.cuni.cz)}$^{\dagger}$
}
\date{}
\maketitle
\vspace*{-8mm}

\begin{abstract}

We propose and analyze a linear and partitioned finite element method for fluid–shell interactions under the arbitrary Lagrangian-Eulerian (ALE) framework. We adopt the P1-bubble/P1/P1 elements for the fluid velocity, pressure, and structure velocity, respectively. We show the stability and error estimates of the scheme without assuming infinitesimal structural deformation nor neglecting fluid convection effects. The theoretical convergence rate is further corroborated by numerical experiments.

\medskip

\textsc{Keywords:}
fluid-structure interaction, 
partitioned scheme, 
stability, 
error estimates, 
finite element method

\textsc{MSC(2010): 35Q30, 76N99, 74F10, 65M12, 65M60 }
\end{abstract}



\section{Introduction}\label{sec:1}

The computational modeling of fluid-structure interactions (FSI) has garnered extensive attention over recent decades due to its significance in biological systems, aerospace engineering, and energy technologies \cite{Bazilevs,Bodnar,Tezduyar}. A large number of numerical methods have been proposed, ranging from monolithic schemes to modular partitioned strategies. 
The monolithic methods solve the coupled large fluid and structure system, see e.g. Gee et al. \cite{Gee}, Hecht and O. Pironneau \cite{Pironneau}, and Lozovskiy et al. \cite{Lozovskiy,Lozovskiy2}. This type of methods are usually more stable due to the strong coupling between fluid and structure. 
The \emph{partitioned} (sometimes called {splitting or kinematically coupled or loosely coupled}) methods are usually cheaper as the large FSI system is divided into two smaller subsystems, see e.g. Badia et al. \cite{Badia}, Buka\v{c} et al. \cite{Boris1},  Hundertmark and Luk\'{a}\v{c}ov\'{a}-Medvid'ov\'{a} \cite{Hundertmark}, 
Luk\'{a}\v{c}ov\'{a}-Medvid'ov\'{a} et al. \cite{Lukacova}. 
To maintain the stability, suitable boundary conditions are needed at the interaction interface, see the comparison of different types of partitioned methods in \cite{Fernandez13}.

Despite the practical success of partitioned schemes, their rigorous convergence analysis remains a highly delicate and technically demanding task in general. Significant progress in this direction has been achieved in a series of influential works, including those by Buka\v{c} and Muha \cite{Boris2}, Burman et al. \cite{Burman,Burman2}, Fern'{a}ndez and Mullaert \cite{Fernandez}, Li et al. \cite{Buyang24}, Seboldt and Buk\v{c} \cite{Seboldt}, among others. These contributions provide deep insights into the stability and convergence mechanisms of partitioned methods with careful treatment of the interaction interface. 

Owing to the intrinsic nonlinearity of the coupled fluid–structure system, existing analytical results are typically established within a simplified yet mathematically tractable framework, often assuming infinitesimal solid displacements and neglecting convective effects in the fluid equations. The objective of the present paper is to extend the theoretical understanding of partitioned schemes beyond this commonly adopted setting. More precisely, we establish the energy stability of a linear, partitioned finite element method (see Theorem~\ref{Thm_Sta}) together with its linear convergence rate (see Theorem~\ref{theorem_conv_rate}), without invoking the above-mentioned assumptions.


The plan of the paper is the following. In Section~2 we describe the problem and present our numerical method. In Section~3 we prove the energy stability of the numerical solution. In Section~4 we analyze the convergence rate of the numerical solution toward a strong solution. In Section~5 we present the numerical experiments. Section~6 is the conclusion.

\section{Problem description and numerical method}\label{Sec_wf}
In this section, we introduce the notations, function spaces, and some preliminary estimates.
\subsection{Problem formulation}
  In this paper, we are interested in the interaction between an incompressible viscous fluid and a thin deformable shell structure. Specifically, we consider the structure to be the upper boundary $\Gamma_S$ of the fluid domain. It results in a nonlinear coupled system with a time dependent fluid domain given by 
\[ \Of(t)=\left\{\vx =(\xx,\xy) \in \Sigma \times (0, \eta(t,\xx)) \right\} \subset \R^2, \; \Sigma = (0,L), \mbox{ and } \Gamma_S =\left\{\vx =(\xx, \eta(t,\xx)) \mid \xx \in \Sigma \right\},
\]
where $\eta=\eta(t,\xx)$ represents the height of the upper boundary $\Gamma_S(t)$. 
Typically, the fluid dynamics are described by the balance equations of mass and momentum in the current (Eulerian) configuration, while the structure dynamics are formulated in the reference (Lagrangian) configuration. 
The motion of the coupled FSI system is governed by the following equations. 
\begin{equation}\label{pde_f}
\left\{
\begin{aligned}
&\Div \vu =0 , &\quad \text{ in } (0,T) \times \Of, 
\\
&
\vrf \left( \pdt \vu + (\vu \cdot \Grad) \vu \right) - \Div \bftau(\vu, p) =0,  &\quad \text{ in } (0,T) \times \Of,
\\
& 
\vrs \pdtt \eta  + \calL (\eta) = f, \quad \xi=\pdt \eta,  &\quad \text{ on } (0,T) \times  \Sigma. 
\end{aligned}
\right.
\end{equation}
Here,  
    $\bftau=  2\mu (\Grad \vu)^\rmS -p\I,  (\Grad \vu)^\rmS = (\Grad \vu+ (\Grad \vu)^\rmT)/2, 
    \calL (\eta) = - \gamma_1  \Lapx \eta - \gamma_2 \Lapx \zeta,   \zeta = - \Lapx \eta, $
 $\vu = \vu(t, \vx)$ and $p = p(t, \vx)$ are the fluid velocity and pressure, respectively. 
The symbols $\vrf$ and $\vrs$ denote the densities of the fluid and the structure, respectively. 
The constants $\gamma_1 > 0$ and $\gamma_2 > 0$ are given parameters. 
$\xi = \xi(t, \vx)$ represents the structure velocity, 
and $f = f(t, \vx)$ denotes the interaction force acting on the structure due to the Cauchy stress of the fluid.
The problem is closed by the initial data 
\begin{equation}\label{pde_ini}
\vu(0) = \vu_0 \text{ in } \Of(0), \quad 
\eta(0,\cdot) = \eta_0>0, \; \xi(0,\cdot) = \xi_0 \; \text{in } \Sigma,
\end{equation}
together with no-slip boundary conditions 
\[\vu|_{\Gamma_D} =0, \ \Gamma_D = \pd \Of \setminus \Gamma_S, \mbox{ and } \pdx \eta|_{\pd \Sigma}=0\]
 and the coupling conditions on the interface:
\begin{equation}\label{pde_bdc}
   \vu|_{\Gamma_S} = \xi \er, \quad
    f = -\,\er \cdot \big( \detJ \bftau(\vu,p) \circ \ALE \, \Jacob^{-T} \big) \cdot \er,
\end{equation}
where $\ALE$ is an invertible mapping from the reference domain $\Oref$ to the time-dependent fluid domain $\Of$ that is introduced to handle the inconsistency between the Eulerian description of the fluid and the Lagrangian description of the structure. This allows us to describe the entire problem in the single common configuration $\Oref$. 
Without loss of generality, we set $\Oref = \Sigma \times (0,1)$. 
In this work, the mapping $\ALE$ is explicitly constructed as a linear extension:
\begin{subequations}\label{Jacob}
\begin{equation}
\ALE: \Oref \mapsto \Of, \quad 
(\xrefx, \xrefy) \mapsto (\xx,\xy) = \ALE(t,\vxref) 
= \left( \xrefx , \eta \, \xrefy \right),
\end{equation}
with the corresponding Jacobian and determinant given by
\begin{equation}
\Jacob = \Gradref \ALE =
\begin{pmatrix}
1 & 0 \\[3pt]
\xrefy \, \frac{\partial\eta}{\partial\hat{x}_1} & \eta
\end{pmatrix},
\qquad
\detJ = \det(\Jacob) = \eta.
\end{equation}
\end{subequations}
Here and in what follows, for any generic function $v$ defined on the current domain, 
its pullback to the reference domain is denoted by $\widehat{v} = v \circ \ALE$.


\paragraph{Function spaces}We use the standard notations of $W^{k,p}(D)$, $W^{k,p}_0$, and $L^p(D)$ as Sobolev space, Sobolev space with vanishing trace, and Lebesgue space on a generic domain $D$, respectively. Moreover, we introduce the following coupled space to recover the no-slip conditions on the interface.
\begin{align*}
\FSIref = \left\{(\hbfphi,\psi)\in W^{1,2}(\Oref) \times W^{1,2}(\Sigma) \mid \hbfphi=0\text{ on }{\Gamma}_D,\; \psi(\xrefx) \er =\hbfphi(\xrefx,1) \right\}.
\end{align*}
With the above notation and via the change of coordinates, we introduce the weak formulation of the FSI problem \eqref{pde_f}--\eqref{pde_bdc} on the reference domain $\Oref$, see e.g. \cite{SST}. 
\begin{Definition}[Weak formulation on the reference domain $\Oref$]\label{Lwfref} 
We say the following formula is a weak formulation of the FSI problem on $\Oref$.
\begin{subequations}\label{wf_ref}
\begin{equation}\label{wf1_ref}
\intOref{    \hq  \Grad \hvu :  \M} =0 \quad  \mbox{ for all } q \in L^2_0(\Oref);
\end{equation}
\begin{equation}\label{wf2_ref}
\begin{aligned}
&\vrf \intOref{ \left( \detJ \pdt \hvu  +   \pdt \detJ  \frac{\hvu}2 \right)\cdot \hbfphi } 
+ \frac12 \vrf \intOref{   \big( \hbfphi \cdot (\Grad \hvu)   -  \hvu \cdot (\Grad \hbfphi)   \big) \cdot  \Jacob^{-1} \cdot \hvv \detJ }
\\&
+ \intOref{   \hbftau(\hvu,\hp) : \big(\Grad \hbfphi \Jacob^{-1} \big)\detJ } 
+ \vrs\intS{\pdt \xi  \psi} + a_s(\eta, \zeta,\psi)=0,
\end{aligned}
\end{equation}
\end{subequations}
for all $(\hbfphi,\psi)\in\FSIref$, where $\hvv = \hvu -\hvw$, $\hvw =\pdt \ALE$, $\zeta=-\Lapx \eta$,  $\ALE$ and $\Jacob$ are given in \eqref{Jacob}, and 
\begin{equation}\label{tauref}
\begin{aligned}
& a_s(\eta, \zeta, \psi) =   \intS{ (\gamma_1 \pdx  \eta + \gamma_2 \pdx \zeta)   \pdx \psi },
\\&
\hbftau(\hvu,\hp) = \bftau(\vu,p) \circ \ALE 
=  2 \mu ( \Grad \hvu \Jacob^{-1} )^\rmS -  \hp \Id,  
\quad \M =\detJ \Jacob^{-T} . 
\end{aligned}
\end{equation}
\end{Definition}

\paragraph{Finite element space}
Let $\grids$ be the surface mesh of $\gridf$ on the top boundary $\widehat{\Gamma}_S = {\Gamma}_S \circ \ALE$ and 
let $\gridf$ be a shape regular and quasi-uniform triangulation of the reference domain $\Oref$, where $h$ stands for the maximum diameter of all elements of $\gridf$.  We denote by $K \in \gridf$ a generic element in $\gridf$ and by $\sigma \in \grids$ a generic face element in $\grids$. 
Moreover, we introduce the following function spaces on $\Oref$ 
\begin{align*}
& \hVfh = \left\{\hbfphi \in W^{1,2}(\Oref;\R^d) \middle|  \hbfphi \in \calP^1(K)   \oplus B_1(K)  , \; \forall K \in \gridf, \hbfphi|_{\Gamma_D}=\mathbf{0}, \widehat{\varphi}_1 
|_{\widehat{\Gamma}_S}=0
\right\},
\quad
\\& B_1(K)=\left\{\phi \in \calP^3(K) \middle|  \phi(a_i)=0, \quad \text{where } a_i, \;  i=1,2,3, \ \text{are\ vertices\ of}\ K \in \gridf \right\}, 
\\&
\hQfh = \left\{\hq \in {C^0} (\Oref) \middle|  \hq \in \calP^1(K), \; \forall K \in \gridf \right\}, 
\quad 
\Vsh= \left\{ \psi \in W^{1,2}_0(\Sigma)\middle| \psi \in \calP^1(\sigma), \; \forall \sigma \in \grids \right\}  ,
\end{align*}
where $\calP^n(K)$ (resp. $\calP^n(\sigma)$) denote polynomials of degree not greater than $n$ on $K$ (resp. on $\sigma$).   
In order to deal with the fourth order term we introduce a discrete Laplace  
 $\zeta_h = - \Laph \eta_h \in \Vshz := \Vsh \cap L^2_0(\Sigma)$ that is uniquely defined by  
 \begin{equation}\label{laph}
 \intS{ \zeta_h\; \psi }  +\intS{ \pdx \eta_h \pdx \psi } =0 \quad \mbox{for all } \psi \in \Vsh.
 \end{equation}
With the above notation, we have for all $\psi \in \Vsh$ that
\begin{equation}\label{as2}
    a_s(\eta_h, \zeta_h, \psi) =   \intS{ (\gamma_1 \pdx  \eta_h  + \gamma_2 \pdx \zeta_h)   \pdx \psi } 
    = \intS{ (\gamma_1  \zeta_h  - \gamma_2 \Laph \zeta_h )  \psi }.
 \end{equation}
Next, recalling Boffi et al.~\cite{boffi} and Schwarzacher et al. \cite{SST} we know there exist an interpolation operator $\Piq: \;  L^2(\Oref)   \; \mapsto \;   \Qfh$ and a Riesz projection operator $\Riesz: W^{1,2}(\Sigma) \mapsto \Vsh$, such that 
\begin{equation*}
\begin{aligned}
\norm{\Piq p - p }_{W^{k,s}} \aleq h \norm{p}_{W^{k+1,s}}, \; k=1,2, \; s \in [1,\infty], 
\end{aligned}
\end{equation*}
\begin{align*}
\intS{\pdx (\Riesz \eta - \eta)  \pdx \psi} =0  \quad \forall \; \psi \in \Vsh \quad \text{ with } \intS{ \Riesz \eta}=\intS{ \eta} \mbox{ for all } \eta\in W^{1,2}(\Sigma),
\end{align*}
\begin{align*}
\intS{(\Laph \Riesz \eta - \Lapx \eta)  \; \psi} = 0 \quad \forall \; \psi \in \Vsh \mbox{ and } \eta \in W^{2,2}(\Sigma),
\end{align*}
\begin{equation}\label{RieszE}
\norm{\pdx \Riesz \eta}_{L^2(\Sigma)} \aleq \norm{\pdx \eta}_{L^2(\Sigma)},
\, 
\norm{\Laph \Riesz \eta}_{L^2(\Sigma)} \aleq \norm{\Lapx \eta}_{L^2(\Sigma)}, 
\,
\norm{\Laph \Riesz \eta - \Lapx \eta}_{L^2(\Sigma)} \aleq h \norm{\eta}_{ W^{3,2}(\Sigma)},
\end{equation}
where by $a\aleq b$ we mean $a\leq c b$ for a positive constant $c$ that is independent of the computational parameters $\TS$ and $h$. 

Further, we recall an interpolation operator $\PiF: W^{k,p}(\Oref) \mapsto \hVfh$ for the fluid velocity from \cite{SST} that matches the Dirichlet boundary condition.   
\begin{Theorem}(\cite[Theorem 5.6]{SST})
\label{thm:projection-velocity}
Let $\Ofh\subset \R^2$ be a subgraph, the grids $\mathcal{T}_h$ and $\Sigma_h$ respectively defined on the domains $\widehat{\Omega}$ and $\Sigma$ be shape regular and quasi-uniform. Besides, let $\Gamma_S=\{(x_1,\eta_h(x_1))| x_1\in\Sigma\}$ satisfying $ \min_\Sigma \eta_h \geq \delta $ and $\norm{\partial_{\xx} \eta_h}_{L^\infty} \leq L$. Assume moreover that $ \min_\Sigma \eta \geq \delta $ and $\norm{\partial_{\xx} \eta}_{L^\infty} \leq L$.
Then there exists 
\begin{align*}
\PiF: \;  \; \left\{  (\hvu,\xi) \in \FSIref \mid \Div\vu=0\text{ on }\Omega_\eta \right\}\to \; 
\hFSIh
:=\left\{(\hbfphi,\psi)\in  \hVfh \times \Vsh \middle| \hbfphi(\xrefx,1)=\psi(\xrefx)\er\right\}
\end{align*}
 satisfying for $\gamma <\infty$ that  
\begin{align}
\label{iu}
\begin{aligned}
 \norm{ \hvu- \PiF \hvu}_{L^\gamma(\Oref)} + h  \norm{\Grad ( \hvu- \PiF \hvu)}_{L^2(\Oref)} &\aleq  h^2 \norm{\hvu}_{W^{2,2}(\Oref)}+ h^2 \norm{\xi}_{W^{2,2}(\Sigma)}+ h\norm{\eta-\eta_h}_{W^{1,2}(\Sigma)}, 
\\
\norm{\PiF \hvu}_{L^\gamma(\Oref)} +  \norm{\Grad  \PiF \hvu}_{L^2(\Oref)} &\aleq\norm{\hvu}_{W^{1,2}(\Oref)}+\norm{\xi}_{W^{1,2}(\Sigma)} + \norm{\eta-\eta_h}_{W^{2,2}(\Sigma)},
\end{aligned}
\end{align}
where the bounds depend linearly on $\frac{1}{\delta}, L, \frac{L}{\delta}$.
Moreover, we find 
$\PiF \hvu (\xx,1) =  (0,\Riesz \xi(\xx)) $ on $\Sigma$ and
\begin{equation}\label{P3}
\int_{\Oref} {\hq \nabla \PiF \vu : \M(\eta_h)} \, dx= 0
\quad \forall \; \hq \in \hQfh. 
\end{equation}
\end{Theorem}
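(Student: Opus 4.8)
The plan is to obtain $\PiF$ by a Fortin-type construction in three steps: (i) prescribe the interface trace through the Riesz projection $\Riesz$ on $\Sigma$ and a vanishing trace on $\Gamma_D$; (ii) build a first approximation $\widetilde\Pi\hvu$ in the $\calP^1$-part of $\hVfh$ by a Scott--Zhang/Cl\'ement quasi-interpolation respecting these boundary data, which already yields the optimal rates; (iii) add an element-bubble correction $\hvu_b\in\hVfh$ that exactly restores the discrete incompressibility \eqref{P3}, whose size is controlled via the inf--sup stability of the $\calP^1$-bubble/$\calP^1$ (MINI) pair for the weighted form $b_{\eta_h}(\bfphi,\hq):=\intOref{\hq\,\Grad\bfphi:\M(\eta_h)}$. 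Since the bubbles of $B_1(K)$ vanish on $\partial K$, step (iii) leaves the traces from (i) untouched; and since $\min_\Sigma\eta_h\ge\delta$ and $\|\pdx\eta_h\|_{L^\infty}\le L$, the field $\M(\eta_h)$ is uniformly bounded and invertible, so the MINI pair is inf--sup stable for $b_{\eta_h}$ uniformly in $h$, with constant depending only on $\frac{1}{\delta}, L, \frac{L}{\delta}$.

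For (i)--(ii): set $\psi_h:=\Riesz\xi\in\Vsh$, require $\widetilde\Pi\hvu(\xrefx,1)=(0,\psi_h(\xrefx))$ on $\widehat{\Gamma}_S=\Sigma\times\{1\}$ (compatible with $\hVfh$ because $\widehat{\varphi}_1|_{\widehat{\Gamma}_S}=0$) and $\widetilde\Pi\hvu=\mathbf{0}$ on $\Gamma_D$. A quasi-interpolant whose boundary functionals are adapted to these constraints enjoys, by the standard local estimates and the bounds \eqref{RieszE} for $\Riesz$, the approximation property $\|\hvu-\widetilde\Pi\hvu\|_{L^\gamma(\Oref)}+h\|\Grad(\hvu-\widetilde\Pi\hvu)\|_{L^2(\Oref)}\aleq h^2\|\hvu\|_{W^{2,2}}+h^2\|\xi\|_{W^{2,2}}+h\|\eta-\eta_h\|_{W^{1,2}}$ (the last term being the price of replacing $\xi$ by $\Riesz\xi$ on the discrete interface) together with the $\calP^1$-level stability $\|\widetilde\Pi\hvu\|_{L^\gamma(\Oref)}+\|\Grad\widetilde\Pi\hvu\|_{L^2(\Oref)}\aleq\|\hvu\|_{W^{1,2}}+\|\xi\|_{W^{1,2}}$.

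For (iii): put $\ell(\hq):=b_{\eta_h}(\widetilde\Pi\hvu,\hq)$. Because $\hvu$ is divergence-free for the \emph{continuous} geometry, $\intOref{\hq\,\Grad\hvu:\M(\eta)}=0$, and writing $\M(\eta_h)=\M(\eta)+(\M(\eta_h)-\M(\eta))$ splits $\ell$ into an approximation part $\intOref{\hq\,\Grad(\widetilde\Pi\hvu-\hvu):\M(\eta)}$ and a geometric part $\intOref{\hq\,\Grad\widetilde\Pi\hvu:(\M(\eta_h)-\M(\eta))}$. Both $\M(\eta)$ and $\M(\eta_h)-\M(\eta)$ are cofactors of gradient fields, hence $\Divref$-free elementwise (Piola identity), so one integrates by parts, the interior face contributions cancel, and the surviving boundary terms on $\widehat{\Gamma}_S$ collapse: there the outward normal is $\er$, $(\M(\eta_h)-\M(\eta))\er=(\pdx(\eta-\eta_h),0)$ is tangential so it annihilates the normal interface values $(0,\psi_h)$ and $(0,\xi)$, leaving only a term $\intS{\hq(\Riesz\xi-\xi)}$ from $\M(\eta)$. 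For the volume remainders one uses that $\M(\eta_h)-\M(\eta)$ has a vanishing second row, so only the first velocity component survives the pairing, and this component vanishes on $\widehat{\Gamma}_S$; an extra integration by parts in the vertical variable then removes the $\partial_2\hq$ factor without invoking the inverse inequality, so that, together with $\|\M(\eta_h)-\M(\eta)\|\aleq\|\eta-\eta_h\|_{W^{1,2}(\Sigma)}$ (one-dimensional embeddings on $\Sigma$), the trace inequality on $\widehat{\Gamma}_S$, and step (ii), one arrives at $\sup_{\hq\in\hQfh}|\ell(\hq)|/\|\hq\|_{L^2(\Oref)}\aleq h\|\hvu\|_{W^{2,2}}+h\|\xi\|_{W^{2,2}}+\|\eta-\eta_h\|_{W^{1,2}(\Sigma)}+(\text{higher-order products})$. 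Local solvability of the bubble problems (immediate since $b_K>0$ on $K$ and $\M(\eta_h)$ is invertible) combined with the inf--sup stability then furnishes $\hvu_b\in\hVfh$, supported in the bubble part, with $b_{\eta_h}(\hvu_b,\hq)=-\ell(\hq)$ for all $\hq\in\hQfh$ and $\|\Grad\hvu_b\|_{L^2(\Oref)}+h^{-1}\|\hvu_b\|_{L^\gamma(\Oref)}\aleq\sup_{\hq}|\ell(\hq)|/\|\hq\|_{L^2(\Oref)}$. Setting $\PiF\hvu:=\widetilde\Pi\hvu+\hvu_b$ then produces the required interface trace $\PiF\hvu(\xx,1)=(0,\Riesz\xi(\xx))$; identity \eqref{P3} holds even against constant $\hq$ since the physical compatibility $\intS{\Riesz\xi}=\intS{\xi}=\intOf{\Div\vu}=0$ is preserved; and adding the bounds for $\hvu_b$ to those for $\widetilde\Pi\hvu$ gives \eqref{iu}, with all constants depending linearly on $\frac{1}{\delta}, L, \frac{L}{\delta}$.

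The main obstacle is exactly the correction step (iii): one must carry the \emph{uniform-in-$h$} inf--sup stability of the MINI pair through the non-constant, merely piecewise-affine geometric weight $\M(\eta_h)$, and simultaneously quantify the geometric inconsistency $\M(\eta_h)-\M(\eta)$ precisely enough to distinguish the two $\eta-\eta_h$ norms appearing in the two lines of \eqref{iu} — in particular to avoid the loss of a power of $h$ that a crude estimate of the geometric part would incur. This is where the interplay of the Piola identity, the tangential structure of $(\M(\eta_h)-\M(\eta))\er$ and the vanishing first fluid-velocity component on $\widehat{\Gamma}_S$, the additional vertical integration by parts, the discrete trace and inverse inequalities, and the one-dimensional Sobolev embeddings on $\Sigma$ must be orchestrated, all while tracking the dependence on $\delta$ and $L$.
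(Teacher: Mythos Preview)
The paper does not prove this theorem: it is quoted verbatim from \cite[Theorem~5.6]{SST} and used as a black box, so there is no in-paper proof to compare against. Your Fortin-type construction (Scott--Zhang/Cl\'ement quasi-interpolant with prescribed interface trace, followed by an element-bubble correction to enforce the weighted discrete divergence constraint) is the natural approach and is indeed the strategy adopted in \cite{SST}; the identification of step~(iii) --- carrying the MINI inf--sup constant through the piecewise-affine weight $\M(\eta_h)$ and quantifying the geometric defect $\M(\eta_h)-\M(\eta)$ sharply enough to distinguish the $W^{1,2}$ versus $W^{2,2}$ norms of $\eta-\eta_h$ in the two lines of \eqref{iu} --- as the crux is correct.

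One small slip: in step~(ii) you write that the term $h\norm{\eta-\eta_h}_{W^{1,2}(\Sigma)}$ in the approximation bound is ``the price of replacing $\xi$ by $\Riesz\xi$ on the discrete interface''. That is not right: the Riesz-projection error on $\Sigma$ contributes only $h^2\norm{\xi}_{W^{2,2}(\Sigma)}$ via \eqref{RieszE}, and has nothing to do with $\eta-\eta_h$. The $\eta-\eta_h$ contribution enters \emph{only} through the bubble correction of step~(iii), precisely because the divergence constraint \eqref{P3} is posed with respect to the discrete geometry $\M(\eta_h)$ while $\hvu$ is divergence-free for the continuous geometry $\M(\eta)$. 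So the quasi-interpolant $\widetilde\Pi\hvu$ of step~(ii) should satisfy the cleaner bound $\norm{\hvu-\widetilde\Pi\hvu}_{L^\gamma}+h\norm{\Grad(\hvu-\widetilde\Pi\hvu)}_{L^2}\aleq h^2\norm{\hvu}_{W^{2,2}}+h^2\norm{\xi}_{W^{2,2}}$, and both occurrences of $\norm{\eta-\eta_h}$ in \eqref{iu} are produced by $\hvu_b$. This misattribution is cosmetic and does not affect the overall argument.
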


\subsection{The numerical method}\label{2.2}
Now we are ready to propose a finite element method for the discretization of the weak formulation \eqref{wf_ref}. 
\begin{tcolorbox}
\begin{algorithmic}\label{alg:A}
\STATE {\bf A linear partitioned scheme.} 
\STATE For $k=0$, predict the first motion of structure by $\eta_h^1=\eta_h^0+\tau\xi_h^0.$
\STATE For $k=1,2,\cdots$, solve the FSI problem in two subproblems. 
\STATE \quad Step 1 -- The fluid subproblem: find $(\hp_h^k,\hvu_h^k)\in \hQfh\times\hVfh $ 
\begin{subequations}\label{SKM_ref}
\begin{equation}\label{SKM_ref11}
\intOref{    \hq  \Grad \hvu_h^k :  \Mhk} =0 ,
\end{equation}
\begin{equation}\label{SKM_ref21}
\begin{split}
&\vrf\intOref{ (\detJ_h^{k}\PDt \hvu_h^k 
+\frac12 \PDt \detJ_h^k \hvu_h^{k*} ) \cdot \hbfphi  } 
+\frac12 \vrf \intOref{  \big( \hbfphi \cdot (\Grad\hvu_h^k)      -  \hvu_h^k \cdot (\Grad\hbfphi)   \big)  \cdot  (\Jacob_h^k)^{-1}  \cdot \hvv_h^{k-1} \detJ_h^k } 
\\& 
+  \intOref{  \hbftau(\hvu_h^k,\hp_h^k)  : \left( \Grad \hbfphi  (\Jacob_h^k)^{-1} \right) \detJ_h^k  }
 + \frac{\vrs}{\TS}  \intS{ (\hvu_h^k - \xi_h^{k-1}\er)\cdot \hbfphi} +a_s(\eta_h^k,\zeta_h^k,\hbfphi\cdot\er) =0
\end{split}
\end{equation}
for all $(\hq,\hbfphi)\in \hQfh\times\hVfh$, where $D_t v^k :=(v^k- v^{k-1})/\TS $ is the time difference, $\hvu_h^{k*} =2 \hvu_h^{k-1} - \hvu_h^{k}$, $\hvv_h^{k-1}=\hvu_h^{k-1} - \hvw_h^k$,  and $\hbftau$ and $\M_h^k = \M(\eta_h^k)$ are given in \eqref{tauref}. Moreover, $\zeta_h$ is defined by \eqref{laph} and $a_s$ is given by \eqref{as2}. 
\STATE \quad Step 2 -- The structure subproblem: find $(\xi_h^k=\PDt \eta_h^{k+1},\eta_h^{k+1})\in V_h^s\times V_h^s$, such that
\begin{equation}\label{Structure_sub1a}
\frac{\vrs}{\TS}\intS{\xi_h^{k}\psi}+a_s(\eta_h^{k+1}, \zeta_h^{k+1},\psi)=\frac{\vrs}{\TS }\intS{\hvu_h^k\cdot \er \psi }+a_s(\eta_h^{k},\zeta_h^{k},\psi)
\end{equation}
\end{subequations}
for all  $\psi \in \Vsh$.
\end{algorithmic}
\end{tcolorbox}


\begin{Remark}
Our scheme approximates the no-slip velocity condition on the interface with a truncation error of order ${\cal O}(\TS^2)$. 
Denoting 
$\txi_h^k = \xi_h^k + \frac{\tau^2}{\vrs} \Delta_\xi^k$  with 
\begin{equation}\label{deltaxi}
    \Delta_\xi^k
=D_t (\gamma_1  \zeta_h^{k+1}  - \gamma_2 \Laph \zeta_h^{k+1} )
=  \Laph (\gamma_2  \Laph \xi_h^k - \gamma_1 \xi_h^k ) 
\end{equation}
and taking $\psi = \hvu_h^k\cdot \er - \txi_h^k \in \Vsh$ as the test function in \eqref{Structure_sub1a},  we get 
$\intS{|\hvu_h^k\cdot \er- \txi_h^k|^2 } =0.$ 
As the first component of $\hvu_h^k $ vanishes on the top boundary, we obtain the following modified no-slip condition of the velocities on the interface
\begin{equation}\label{uv}
\hvu_h^k=\txi_h^k \er.
\end{equation}

Summing up  \eqref{SKM_ref21} and \eqref{Structure_sub1a}  with the coupled test function $(\hbfphi,\psi = \widehat{\varphi}_2|_\Sigma)$ yields
\begin{multline}\label{equstability}
\vrf\intOref{ \PDt \hvu_h^k \cdot \hbfphi \detJ_h^{k} } 
+\frac12 \vrf \intOref{  \PDt \detJ_h^k \hvu_h^{k*}  \cdot \hbfphi  } 
+\frac12 \vrf \intOref{  \big( \hbfphi \cdot (\Grad\hvu_h^k)      -  \hvu_h^k \cdot (\Grad\hbfphi)   \big)  \cdot  (\Jacob_h^k)^{-1}  \cdot \hvv_h^{k-1} \detJ_h^k } 
\\
+  \intOref{  \hbftau(\hvu_h^k,\hp_h^k)  : \left( \Grad \hbfphi  (\Jacob_h^k)^{-1} \right) \detJ_h^k  }
 + \vrs  \intS{ \PDt \xi_h^k \widehat{\varphi}_2 } +a_s(\eta_h^{k+1},\zeta_h^{k+1},\widehat{\varphi}_2 )=0.
\end{multline}
\end{Remark}

\section{Stability}
\label{sec:stab}
In this section, we study the energy stability of the scheme \eqref{SKM_ref}. The total energy of the FSI system at time $t^k$ reads
$$
 E_h^k =\frac{\rho_f}{2} \intOref{ \eta_h^k   |\hvu_h^{k}|^2  }  
+ \frac{\vrs}{2}\norm{\xi_h^k}_{L^2(\Sigma)}^2 
+ \frac{\gamma_1}{2}\norm{\pdx \eta_h^{k+1}}_{L^2(\Sigma)}^2 
+ \frac{\gamma_2}{2}\norm{\Laph \eta_h^{k+1}}_{L^2(\Sigma)}^2 .
$$
\begin{Theorem}[Energy estimates]\label{Thm_Sta} 
Let  $\{(\hp_h^{k},\hvu_h^{k},\xi_h^k,\eta_h^{k+1})\}_{k=1}^{N}$ be the solution of scheme \eqref{SKM_ref}. 
We have the following energy estimates for all $m=1,\dots,N$
\begin{equation}\label{eq_Sta22}
    E_h^m +D_{num1}^m + \TS \summ \left( 2\mu \intOref{ \eta_h^k \abs{(\Grad \hvu_h^k (\Jacob_h^k)^{-1})^\rmS }^2 }+ D_{num2}^k\right)
     =     E_h^0+D_{num1}^0,
\end{equation}
where $D_{num1}^k$ and $D_{num2}^k$ are the numerical dissipation given by
\[
\begin{aligned}
D_{num1}^k=&\frac{\TS^2}{2}\left(\gamma_1 \norm{\pdx \xi_h^{k} }_{L^2(\Sigma)}^2+\gamma_2 \norm{\Laph \xi_h^{k} }_{L^2(\Sigma)}^2\right)
+ \frac{\TS^2}{2\vrs} \intS{ |\gamma_1  \zeta_h^{k+1}  - \gamma_2 \Laph \zeta_h^{k+1} |^2},
\\
D_{num2}^k=&
\frac{\TS}{2}\vrf\intOref{  \eta_h^{k-1} \abs{ D_t \hvu_h^k}^2 }
+ \frac{\vrs \TS}{2}  \norm{\PDt \xi_h^{k} }_{L^2(\Sigma)}^2+ \frac{\gamma_1 \TS }{2}  \norm{\pdx  \xi_h^k }_{L^2(\Sigma)}^2 
+ \frac{\gamma_2\TS}{2}   \norm{\Laph  \xi_h^k }_{L^2(\Sigma)}^2
\\&+ \frac{\TS^3}{2} \intS{\left(|\pdx  \PDt\xi_h^k|^2+|\Laph  \PDt \xi_h^k|^2+\frac{1}{\vrs} |\PDt(\gamma_1  \zeta_h^{k+1}  - \gamma_2 \Laph \zeta_h^{k+1} )|^2\right)}.
\end{aligned}
\]
\end{Theorem}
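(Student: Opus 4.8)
The plan is to test the summed identity \eqref{equstability} — in which the fluid and structure subproblems are already coupled and the interface unknowns eliminated — with $\hbfphi=\hvu_h^k$. The companion structure test function is then forced to be $\psi=\widehat{\varphi}_2|_\Sigma=\hvu_h^k\cdot\er$, which by the modified no-slip relation \eqref{uv} equals $\txi_h^k=\xi_h^k+\tfrac{\TS^2}{\vrs}\Delta_\xi^k$ on $\Sigma$; thus the whole argument becomes a bookkeeping exercise in which every term of \eqref{equstability} is recognised, via the polarization identity $2a\cdot(a-b)=|a|^2-|b|^2+|a-b|^2$ and $v^k-v^{k-1}=\TS\,\PDt v^k$, as either a discrete time increment of one of the quadratic blocks of $E_h^k$ or $D_{num1}^k$, a nonnegative term of order $\TS$ (a contribution to $D_{num2}^k$ or to the viscous term), or exactly zero.

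For the fluid contributions: the convective term is identically zero because the skew-symmetric bracket $\hbfphi\cdot(\Grad\hvu_h^k)-\hvu_h^k\cdot(\Grad\hbfphi)$ vanishes pointwise when $\hbfphi=\hvu_h^k$, regardless of $\hvv_h^{k-1}$ and of the geometry. In the stress term I split $\hbftau(\hvu_h^k,\hp_h^k)=2\mu(\Grad\hvu_h^k(\Jacob_h^k)^{-1})^\rmS-\hp_h^k\Id$; since $B^\rmS:B=|B^\rmS|^2$ the viscous part equals $2\mu\intOref{\eta_h^k\,\abs{(\Grad\hvu_h^k(\Jacob_h^k)^{-1})^\rmS}^2}$, while the pressure part equals $-\intOref{\hp_h^k\,\Grad\hvu_h^k:\Mhk}$, which is zero by the discrete incompressibility constraint \eqref{SKM_ref11} tested with $\hq=\hp_h^k$. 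The two time-difference terms $\vrf\intOref{\PDt\hvu_h^k\cdot\hvu_h^k\,\detJ_h^k}$ and $\tfrac12\vrf\intOref{\PDt\detJ_h^k\,\hvu_h^{k*}\cdot\hvu_h^k}$ are treated together: from $\hvu_h^{k*}=2\hvu_h^{k-1}-\hvu_h^k$ one gets $\hvu_h^{k*}\cdot\hvu_h^k=|\hvu_h^{k-1}|^2-\TS^2|\PDt\hvu_h^k|^2$, and then the $\PDt\detJ_h^k$ contributions carrying $|\hvu_h^{k-1}|^2$ telescope against a piece of the first term (recall $\detJ_h^k=\eta_h^k$), leaving exactly
\[
\tfrac{\vrf}{2\TS}\Big(\intOref{\eta_h^k|\hvu_h^k|^2}-\intOref{\eta_h^{k-1}|\hvu_h^{k-1}|^2}\Big)+\tfrac{\vrf\TS}{2}\intOref{\eta_h^{k-1}|\PDt\hvu_h^k|^2}.
\]
This exact cancellation — a discrete geometric-conservation mechanism preventing the moving ALE geometry from manufacturing energy — is precisely why the extrapolation $\hvu_h^{k*}$ is used in \eqref{SKM_ref21}.

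For the structure contributions, substitute $\txi_h^k=\xi_h^k+\tfrac{\TS^2}{\vrs}\Delta_\xi^k$ into $\vrs\intS{\PDt\xi_h^k\,\widehat{\varphi}_2}+a_s(\eta_h^{k+1},\zeta_h^{k+1},\widehat{\varphi}_2)$. The ``$\xi_h^k$'' part: $\vrs\intS{\PDt\xi_h^k\,\xi_h^k}$ telescopes to $\tfrac{\vrs}{2\TS}(\norm{\xi_h^k}^2-\norm{\xi_h^{k-1}}^2)+\tfrac{\vrs\TS}{2}\norm{\PDt\xi_h^k}^2$; and since $\xi_h^k=\PDt\eta_h^{k+1}$, the term $a_s(\eta_h^{k+1},\zeta_h^{k+1},\xi_h^k)$ — rewritten through \eqref{laph}--\eqref{as2} so that its $\gamma_1$ and $\gamma_2$ parts become increments of $\norm{\pdx\eta_h^{k+1}}^2$ and $\norm{\Laph\eta_h^{k+1}}^2$, using self-adjointness of $\Laph$ and $\Laph\PDt\eta_h^{k+1}=\PDt\Laph\eta_h^{k+1}$ — yields $\tfrac{1}{2\TS}$ times the increment of $\gamma_1\norm{\pdx\eta_h^{k+1}}^2+\gamma_2\norm{\Laph\eta_h^{k+1}}^2$ plus the dissipation $\tfrac{\gamma_1\TS}{2}\norm{\pdx\xi_h^k}^2+\tfrac{\gamma_2\TS}{2}\norm{\Laph\xi_h^k}^2$. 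The ``$\Delta_\xi^k$'' part, namely $\TS^2\intS{\PDt\xi_h^k\,\Delta_\xi^k}$ and $\tfrac{\TS^2}{\vrs}a_s(\eta_h^{k+1},\zeta_h^{k+1},\Delta_\xi^k)$, is handled by using the two equivalent forms of $\Delta_\xi^k$ in \eqref{deltaxi}: for the first, $\Delta_\xi^k=\Laph(\gamma_2\Laph\xi_h^k-\gamma_1\xi_h^k)$ together with the self-adjointness and definiteness of $\Laph$; for the second, $\Delta_\xi^k=\PDt(\gamma_1\zeta_h^{k+1}-\gamma_2\Laph\zeta_h^{k+1})$ together with $a_s(\eta_h^{k+1},\zeta_h^{k+1},\cdot)=\intS{(\gamma_1\zeta_h^{k+1}-\gamma_2\Laph\zeta_h^{k+1})\,\cdot\,}$ from \eqref{as2}; applying the polarization identity to each, the telescoping parts sum precisely to $D_{num1}^k-D_{num1}^{k-1}$ and the nonnegative remainders to the $\TS^3$-line of $D_{num2}^k$.

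Finally I would assemble the time-level-$k$ identity
\[
\tfrac{1}{\TS}\big(E_h^k-E_h^{k-1}\big)+\tfrac{1}{\TS}\big(D_{num1}^k-D_{num1}^{k-1}\big)+2\mu\intOref{\eta_h^k\,\abs{(\Grad\hvu_h^k(\Jacob_h^k)^{-1})^\rmS}^2}+D_{num2}^k=0,
\]
multiply by $\TS$ and sum over $k=1,\dots,m$; the two telescoping sums collapse to $E_h^m-E_h^0$ and $D_{num1}^m-D_{num1}^0$, which gives \eqref{eq_Sta22}. The main obstacle — and essentially all the real work — is the structure/geometry bookkeeping: (i) verifying the exact cancellation of the $\PDt\detJ_h^k$ terms, which genuinely requires the extrapolation $\hvu_h^{k*}=2\hvu_h^{k-1}-\hvu_h^k$ rather than, say, $\hvu_h^{k-1}$; and (ii) checking that the two $\Delta_\xi^k$-remainders reassemble with the correct signs and $\gamma_i$-weights into $D_{num1}$ and the $\TS^3$-part of $D_{num2}$, which hinges on playing the two forms of $\Delta_\xi^k$ in \eqref{deltaxi} against each other and on the self-adjointness and sign-definiteness of the discrete operator $\Laph$ defined in \eqref{laph}.
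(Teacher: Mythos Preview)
Your proposal is correct and follows essentially the same route as the paper: test the coupled identity \eqref{equstability} with $\hbfphi=\hvu_h^k$ (so that $\psi=\txi_h^k=\xi_h^k+\tfrac{\TS^2}{\vrs}\Delta_\xi^k$ via \eqref{uv}), kill the convective and pressure terms, apply the polarization identity $2a(a-b)=a^2-b^2+(a-b)^2$ to each remaining block, and use the two forms of $\Delta_\xi^k$ in \eqref{deltaxi} exactly as you describe to turn the $\Delta_\xi^k$-contributions into the $D_{num1}$ increment and the $\TS^3$-line of $D_{num2}$. Your identification of the two nontrivial bookkeeping points — the $\hvu_h^{k*}$ cancellation and the $\Delta_\xi^k$ reassembly — matches the paper's computation line for line.
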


\begin{proof}
Setting the test functions as $(\hq,\hbfphi) = (\hp_h^k,\hvu_h^k)$ and $\psi= \xi_h^k + \frac{\TS^2}{\vrs} \Delta_\xi^k$ with $\Delta_\xi^k = \Laph (\gamma_2  \Laph \xi_h^k - \gamma_1 \xi_h^k )$ given in \eqref{deltaxi} in the numerical method and recalling $\detJ_h=\eta_h$, we get 
\begin{multline}\label{k0}
\vrf\intOref{ \PDt \hvu_h^k \cdot \hvu_h^k \eta_h^{k} } 
+\frac{\vrf}2  \intOref{  \PDt \eta_h^k \hvu_h^{k*}  \cdot \hvu_h^k } 
 + 2 \mu   \intOref{ \eta_h^k \abs{(\Grad \hvu_h^k (\Jacob_h^k)^{-1})^\rmS }^2 }  
 \\ + \vrs  \intS{ \PDt \xi_h^k \xi_h^k } +\underbrace{\TS^2\intS{ \PDt \xi_h^k \Delta_\xi^k }}_{=: \; T_1}
 +  a_s(\eta_h^{k+1}, \zeta_h^{k+1}, \xi_h^k) 
+ \underbrace{ \frac{\TS^2}{\vrs} a_s(\eta_h^{k+1}, \zeta_h^{k+1}, \Delta_\xi^k) }_{ =: \; T_2}
= 0.
\end{multline} 
Next, using the algebraic equality $(a-b)a=(a^2-b^2)/2+(a-b)^2/2$ it is easy to check the following identities:
\begin{equation*}
 \vrs\intS{ \PDt  \xi_h^k  \xi_h^k } = \frac{\vrs}2  \intS{ \PDt | \xi_h^k |^2}
+ \frac{\vrs\TS}2  \intS{ |\PDt  \xi_h^k |^2}, 
\end{equation*} 
\begin{equation*}
\begin{aligned}
\vrf\intOref{ \PDt \hvu_h^k \cdot \hvu_h^k \eta_h^{k} } 
+\frac{\vrf}2 \intOref{  \PDt \eta_h^k \hvu_h^{k*}  \cdot \hvu_h^k } 
=\vrf  \PDt\intOref{  \frac12 \eta_h^k   |\hvu_h^{k}|^2  }+\frac{\vrf\TS}{2}\intOref{  \eta_h^{k-1} \abs{ D_t \hvu_h^k}^2 },
\end{aligned}
\end{equation*}
\begin{equation*}
\begin{split}
  a_s(\eta_h^{k+1},  \zeta_h^{k+1},  \xi_h^{k})
  =& \PDt  \left( \frac{\gamma_1}{2}  \norm{ \pdx \eta_h^{k+1}}_{L^2(\Sigma)}^2 +  \frac{\gamma_2}{2}  \norm{ \Laph \eta_h^{k+1} }_{L^2(\Sigma)}^2 \right)  
\\&
+\frac{\gamma_1 \TS}{2} \intS{  \abs{\pdx \xi_h^{k}}^2 }
+ \frac{\gamma_2 \TS }{2} \intS{ \abs{ \Laph \xi_h^{k}}^2  } .
\end{split}
\end{equation*} 
Similarly, we reformulate $T_1$ and $T_2$ in the following way.
\begin{equation*}
\begin{aligned}
T_1&= \TS^2\intS{ \PDt \xi_h^k \Delta_\xi^k } =  \TS^2\intS{ \PDt \xi_h^k \Laph (\gamma_2  \Laph \xi_h^k - \gamma_1 \xi_h^k )}\\
&= \frac{\TS^2}2 \PDt  \intSB{ \gamma_2 |\Laph \xi_h^k|^2 + \gamma_1 |\pdx \xi_h^k|^2  } 
+ \frac{\TS^3}2\intSB{ \gamma_2 |\PDt \Laph \xi_h^k|^2+  \gamma_1 |\PDt \pdx \xi_h^k|^2},
\\
T_2 &=
\frac{\TS^2}{\vrs} a_s(\eta_h^{k+1}, \zeta_h^{k+1}, \Delta_\xi^k) 
= \frac{\TS^2}{\vrs} \intS{ (\gamma_1  \zeta_h^{k+1}  - \gamma_2 \Laph \zeta_h^{k+1} )  D_t (\gamma_1  \zeta_h^{k+1}  - \gamma_2 \Laph \zeta_h^{k+1} )}
\\  &= 
 \frac{\TS^2}{2\vrs} \PDt \intS{ |\gamma_1  \zeta_h^{k+1}  - \gamma_2 \Laph \zeta_h^{k+1} |^2}
 + \frac{\TS^3}{2\vrs} \intS{ |\PDt(\gamma_1  \zeta_h^{k+1}  - \gamma_2 \Laph \zeta_h^{k+1} )|^2}.
\end{aligned}
\end{equation*}
Consequently, collecting the above equalities and substituting them into \eqref{k0}, we derive
\begin{equation}\label{k3}
    \PDt (E_h^k + D_{num1}^k) +2\mu \intOref{ \eta_h^k \abs{(\Grad \hvu_h^k (\Jacob_h^k)^{-1})^\rmS }^2 } + D_{num2}^k = 0.
\end{equation}
Finally, computing $\TS\, \summ \eqref{k3}$ yields \eqref{eq_Sta22}, 
which completes the proof. 
\end{proof}
To ensure that the energy remains meaningful (i.e., positive), we assume that $\eta_h^k > 0$.  More precisely, we assume that for all $t^k \in [0, T]$
 \begin{equation}\label{noc}
\eta_h \geq \underline{\eta}>0. 
\end{equation}
Note that this assumption can be justified by mathematical induction via the error estimates, see our previous result \cite[Theorem~6.2]{SST}. 
From Theorem~\ref{Thm_Sta} and \eqref{noc}, we have the following uniform estimates.  
\begin{Corollary}\label{co_est}
Let the initial data satisfy $\hvu_0 \in W^{1,2}( \Oref;\R^2), \; \eta_0 \in W^{1,2}(\Sigma), \text{ and }  \xi_0 \in W^{1,2}(\Sigma)$. 
Let $(\hp_h,\hvu_h,\xi_h,\eta_h)  = \{(\hp_h^{k},\hvu_h^{k},\xi_h^{k},\eta_h^{k+1})\}_{k=1}^{N}$ be a  solution to scheme \eqref{SKM_ref} with $(\TS,h ) \in (0,1)^2$ and let \eqref{noc} hold.  Then we have the following uniform bounds.
\begin{equation}\label{ests}
\begin{aligned}
 &
\norm{\xi_h}_{L^\infty (0,T;L^2(\Sigma))} 
+ \norm{\pdx \eta_h}_{L^\infty (0,T;L^2(\Sigma))} 
+ \norm{\Laph \eta_h}_{L^\infty (0,T;L^2(\Sigma))}  \aleq 1,
\\& 
\norm{\eta_h^{-1}}_{L^\infty((0,T)\times \Sigma)} +
 \norm{\eta_h}_{L^\infty((0,T)\times \Sigma)} + 
\norm{\pdx \eta_h}_{L^\infty((0,T)\times \Sigma)} \aleq 1,
\\& 
\norm{\Jacob_h}_{L^\infty((0,T)\times \Sigma;\R^{2 \times 2})} 
+ \norm{\Jacob_h^{-1}}_{L^\infty((0,T)\times \Sigma;\R^{2 \times 2})}  
\aleq 1 ,
\\& 
\norm{\hvu_h}_{L^\infty (0,T;L^2(\Oref))} + 
\norm{(\Grad \hvu_h (\Jacob_h)^{-1})^\rmS  }_{L^2((0,T)\times \Oref)} \aleq 1 ,
\\& 
  \norm{\Grad \hvu_h }_{L^2((0,T)\times \Oref)} \aleq 1, \quad 
  \norm{\hvu_h}_{L^2(0,T;L^{q_1}(\Oref))} \aleq 1,
\\&
   \norm{ \xi_h}_{L^2 (0,T; L^\infty (\Sigma))} \aleq \norm{\Grad \hvu_h }_{L^2((0,T)\times \Oref)} \aleq 1 ,
\\&
  \norm{\hvw_h}_{L^\infty (0,T;L^2(\Oref))} +  \norm{\hvw_h}_{L^2(0,T;L^\infty (\Oref))} \aleq  1,
\\&
    \norm{\hvv_h}_{L^2(0,T;L^{q_1}(\Oref))} 
 +  \norm{\hvv_h}_{L^\infty (0,T;L^2(\Oref))} \aleq 1, \quad 
 \norm{\hvv_h}_{L^{q_2}(0,T;L^{q_1}(\Oref;\R^2))}\aleq 1 
.
\end{aligned}
\end{equation} 
for any $q_1\in [1,\infty)$ and  $q_2 \in [1,\infty)$. 
\end{Corollary}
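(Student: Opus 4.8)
The plan is to derive each inequality in \eqref{ests} directly from the energy identity \eqref{eq_Sta22} of Theorem~\ref{Thm_Sta} together with the positivity assumption \eqref{noc}, proceeding in the order the terms appear. First I would note that every term on the left-hand side of \eqref{eq_Sta22} is nonnegative (all are squared norms, numerical dissipations, or viscous terms multiplied by $\eta_h^k>0$), so each individual term is bounded by $E_h^0 + D_{num1}^0$. The right-hand side is controlled by the regularity of the initial data: $E_h^0$ involves $\norm{\hvu_h^0}_{L^2}^2$, $\norm{\xi_h^0}_{L^2}^2$, $\norm{\pdx\eta_h^1}_{L^2}^2$, $\norm{\Laph\eta_h^1}_{L^2}^2$, and $D_{num1}^0$ involves $\TS^2$ times bounded quantities; using the interpolation/projection estimates \eqref{RieszE} and \eqref{iu} applied to $\eta_0,\xi_0,\hvu_0$ and the prediction step $\eta_h^1 = \eta_h^0 + \TS\xi_h^0$, one bounds the initial energy by a constant depending only on the data norms and hence $\aleq 1$. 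This immediately yields the first line (the $L^\infty(0,T;L^2)$ bounds on $\xi_h$, $\pdx\eta_h$, $\Laph\eta_h$), the viscous term bound $\norm{(\Grad\hvu_h(\Jacob_h)^{-1})^\rmS}_{L^2((0,T)\times\Oref)}\aleq 1$, and, via \eqref{noc}, the bound $\norm{\hvu_h}_{L^\infty(0,T;L^2(\Oref))}\aleq 1$ from the term $\frac{\rho_f}{2}\int_{\Oref}\eta_h^k|\hvu_h^k|^2$.

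Next I would pass from these ``energy-level'' bounds to the remaining ones by elementary estimates that use only \eqref{noc} and a Poincaré-type inequality on $\Sigma$. For the second and third lines: $\norm{\eta_h^{-1}}_{L^\infty}\aleq 1$ is exactly \eqref{noc}; $\norm{\pdx\eta_h}_{L^\infty((0,T)\times\Sigma)}$ follows from the $L^\infty(0,T;L^2(\Sigma))$ bound on $\Laph\eta_h$ (or on $\pdx\eta_h$) together with a discrete Sobolev / inverse-type embedding $W^{1,2}(\Sigma)\hookrightarrow L^\infty(\Sigma)$ in one space dimension using the definition \eqref{laph} of the discrete Laplacian, which also controls $\norm{\eta_h}_{L^\infty}$ via the boundary condition $\pdx\eta_h|_{\pd\Sigma}=0$ and Poincaré; then $\norm{\Jacob_h}_{L^\infty}$ and $\norm{\Jacob_h^{-1}}_{L^\infty}$ are algebraic consequences of the explicit formula \eqref{Jacob} for $\Jacob_h$, since its entries are $1$, $0$, $\xrefy\,\pdx\eta_h$, $\eta_h$ and $\det\Jacob_h=\eta_h\geq\underline\eta$. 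For $\norm{\Grad\hvu_h}_{L^2((0,T)\times\Oref)}\aleq 1$ I would write $\Grad\hvu_h = (\Grad\hvu_h\,\Jacob_h^{-1})\,\Jacob_h$, split into symmetric and antisymmetric parts, use the $L^2$ bound on the symmetric part plus $\norm{\Jacob_h^{\pm1}}_{L^\infty}\aleq 1$, and invoke a Korn-type inequality on $\Oref$ with the Dirichlet condition on $\Gamma_D$ to control the full gradient by its symmetric part. The bound $\norm{\hvu_h}_{L^2(0,T;L^{q_1}(\Oref))}\aleq 1$ then follows from $\norm{\hvu_h}_{L^2(0,T;W^{1,2}(\Oref))}\aleq 1$ and the Sobolev embedding $W^{1,2}(\Oref)\hookrightarrow L^{q_1}(\Oref)$ valid for all $q_1<\infty$ in two dimensions; $\norm{\xi_h}_{L^2(0,T;L^\infty(\Sigma))}\aleq\norm{\Grad\hvu_h}_{L^2}$ uses the trace of $\hvu_h$ on $\widehat\Gamma_S$ (recall $\hvu_h^k=\txi_h^k\er$ on $\Sigma$, so $\xi_h^k$ differs from the trace only by the $\order(\TS^2)$ term $\frac{\TS^2}{\vrs}\Delta_\xi^k$, which is itself controlled by $D_{num1}^k$) together with the trace inequality and $W^{1,2}(\Sigma)\hookrightarrow L^\infty(\Sigma)$.

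Finally I would handle the last three lines concerning $\hvw_h$ and $\hvv_h$. Since $\hvw_h^k = \pdt\ALEh$ evaluated discretely, the explicit form \eqref{Jacob} gives $\hvw_h^k = (0,\ \xrefy\,\PDt\eta_h^k)^\rmT$ (up to the exact discrete-time expression), so $|\hvw_h^k|\leq |\PDt\eta_h^k| = |D_t\eta_h^k|$; and $D_t\eta_h^k$ is, by the prediction/update structure $\eta_h^{k+1}=\eta_h^k+\TS\xi_h^k$, essentially $\xi_h^{k-1}$, which is bounded in $L^\infty(0,T;L^2(\Sigma))$ by the first line, and in $L^2(0,T;L^\infty(\Sigma))$ by the line just proved; this gives both $\norm{\hvw_h}_{L^\infty(0,T;L^2(\Oref))}\aleq 1$ and $\norm{\hvw_h}_{L^2(0,T;L^\infty(\Oref))}\aleq 1$. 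Then $\hvv_h = \hvu_h - \hvw_h$, so all the $\hvv_h$-bounds follow by the triangle inequality from the corresponding $\hvu_h$- and $\hvw_h$-bounds: the $L^\infty(0,T;L^2(\Oref))$ and $L^2(0,T;L^{q_1}(\Oref))$ bounds are immediate, and the mixed $L^{q_2}(0,T;L^{q_1})$ bound follows by interpolating in time between $L^\infty(0,T;L^2)$ and $L^2(0,T;L^{q_1})$ (noting $q_2<\infty$ is arbitrary and the time interval $(0,T)$ is finite). The main obstacle, and the step requiring the most care, is making precise the passage from the energy bounds on $\pdx\eta_h$, $\Laph\eta_h$ to the pointwise-in-time-and-space bound $\norm{\pdx\eta_h}_{L^\infty((0,T)\times\Sigma)}\aleq 1$ uniformly in $h$: in one dimension this is the embedding $W^{1,2}(\Sigma)\hookrightarrow L^\infty(\Sigma)$, but one must verify it applies at the discrete level to $\eta_h^k$ (with $\pdx\eta_h$ piecewise constant) without incurring an $h$-dependent constant, which is where the discrete Laplacian \eqref{laph} and the quasi-uniformity of $\grids$ enter — and where, if one wanted the bound uniformly for all $h$, one genuinely needs the $\Laph\eta_h$ control rather than just $\pdx\eta_h$, because $\eta_h^k\in\Vsh$ is only piecewise linear and a naive inverse estimate would lose a factor $h^{-1/2}$.
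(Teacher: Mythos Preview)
Your proposal is correct and follows exactly the route the paper only gestures at: the paper gives no proof of this corollary beyond the sentence ``From Theorem~\ref{Thm_Sta} and \eqref{noc}, we have the following uniform estimates,'' so your detailed derivation is precisely the content that has to be supplied. One small precision worth making: the Korn step should be carried out on the physical domain $\Omega_{\eta_h^k}$ (where $(\Grad\hvu_h\,\Jacob_h^{-1})^\rmS$ is the genuine symmetric gradient of $\vu_h$), with a constant that is uniform in $k$ because the domains are uniformly Lipschitz thanks to the $\norm{\pdx\eta_h}_{L^\infty}$ bound you have already established, and then pulled back to $\Oref$ via the $L^\infty$ bounds on $\Jacob_h^{\pm1}$---applying Korn directly on $\Oref$ would only control $(\Grad\hvu_h)^\rmS$, which is not the quantity delivered by the energy.
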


\section{Error estimates}
In this section, we study the error between the numerical solution $(\hp_h,\hvu_h,\xi_h,\eta_h)$ of scheme \eqref{SKM_ref} and its target smooth solution $(\hp,\hvu,\xi,\eta)$. Here we assume the existence of a smooth solution of \eqref{pde_f}--\eqref{pde_bdc} in the following class 
\begin{equation}\label{STClass}
\left\{
\begin{aligned}
& \eta>\underline{\eta}, \; \eta \in 
W^{1,2}(0,T; W^{4,2}(\Sigma))\cap  W^{2,2}(0,T; W^{2,2}(\Sigma)),
\\&  
\hvu \in   
L^\infty (0,T; W^{1,2}(\Oref;\R^2)) \cap 
L^2 (0,T; W^{2,2}(\Oref;\R^2))
\\&
\pdt \hvu \in  L^2 (0,T;W^{1,2}( \Oref;\R^2 )) \cap  L^\infty (0,T;L^2( \Oref;\R^2 )), 
\pdt ^2 \hvu \in  L^2 ((0,T)\times\Oref;\R^2 ) 
\; 
\\&
\hp \in L^\infty( 0,T; L^2(\Oref) ), \; \Grad p \in L^2 ( (0,T)\times \Oref ). 
\end{aligned}
\right.
\end{equation}

Before introducing the main result, let us denote the following error terms for each time step $k \in\{1,\dots,N_T\}$.
\begin{equation}\label{ers}
\begin{aligned}
&e_p^k 	=   \hp_h^k  -\hp^k   =(\hp_h^k  	- \Piq \hp^k)  	+ (\Piq \hp^k -  \hp^k )      	=: \delta_p^k 	+ I_p^k , \\ 
&\eu^k  	= \hvu_h^k -\hvu^k  =(\hvu_h^k 	- \PiF \hvu^k) 	+ (\PiF \hvu^k -  \hvu^k )   		=: \delta_\vu^k 	+ I_\vu^k ,   \\ 
&e_\xi^k   	=\xi_h^k -\xi^k   	= (\xi_h^k 	- \Riesz  \xi^k )  	+ (\Riesz  \xi^k -  \xi^k)    		=: \delta_\xi^k 	+ I_\xi^k ,\\
&e_\eta^k  	= \eta_h^k -\eta^k	=(\eta_h^k	- \Riesz \eta^k)	+ (\Riesz \eta^k - \eta^k ) 	=: \delta_\eta^k	+ I_\eta^k ,  \\ 
&e_\zeta^k  	= \zeta_h^k -\zeta^k	=(\zeta_h^k	+ \Laph \Riesz \eta^k)	+ (-\Laph \Riesz \eta^k - \zeta^k )	=: \delta_\zeta^k	+ I_\zeta^k .  \\ 
\end{aligned} 
\end{equation}
Now we are ready to present the main result of the paper. 
\begin{Theorem}[Convergence rate]\label{theorem_conv_rate}
Let $\{(\hp_h^k, \hvu_h^k,\xi_h^k,\eta_h^{k+1})\}_{k=1}^{N_T}$ be the solution of the splitting scheme \eqref{SKM_ref}, and let  $(\hvu,\hp,\xi,\eta)(t)$, $t\in(0,T)$, be a strong  solution of \eqref{pde_f}--\eqref{pde_bdc} belonging to the class \eqref{STClass}. 
Then for any $m \in \{1,\cdots,N_T\}$ it holds
\begin{multline*}
 \frac12 \vrf \intOref{|e_\vu^m|^2 \eta_h^m}   + \frac12  \intSB{ \vrs \abs{e_\xi^m}^2 +  \gamma_1 |\pdx e_\eta^{m+1} |^2 +  \gamma_2 \abs{ e_\zeta^{m+1}}^2}
\\ +  2 \mu   \TS \summ \intOref{\left| \Grad e_\vu^k (\Jacob_h^k)^{-1}\right|^2} 
\aleq \TS^2 +h^2.
\end{multline*}
In particular, we have the following convergence rates
\begin{multline*}
 \norm{e_\vu}_{L^\infty(0,T;L^2(\Oref;\R^2))}
+\norm{e_\xi}_{L^\infty(0,T;L^2(\Sigma))}
+\norm{\pdx e_\eta}_{L^\infty(0,T;L^2(\Sigma))}
+\norm{e_\zeta}_{L^\infty(0,T;L^2(\Sigma))}
\\
+\norm{\Grad e_\vu}_{L^2((0,T)\times \Oref;\R^{2\times 2})}
\aleq 
\TS +h .
\end{multline*}
\end{Theorem}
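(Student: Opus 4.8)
The plan is to derive an error equation by subtracting a consistency formulation (the strong solution tested against discrete test functions) from the scheme \eqref{equstability} together with the structure update \eqref{Structure_sub1a}, then to split each error into its projection part ($\delta$) and interpolation part ($I$) as in \eqref{ers}, and finally to close a discrete Gr\"onwall estimate on the discrete energy of the $\delta$-errors. Concretely, I would first write the scheme in the summed-up form \eqref{equstability} with the coupled test function $(\hbfphi,\psi)$, take $(\hbfphi,\psi) = (\delta_\vu^k, \delta_\vu^k\cdot\er|_\Sigma + \tfrac{\TS^2}{\vrs}\Delta_\xi^k[\delta])$ mimicking the stability test in Theorem~\ref{Thm_Sta}, and subtract the corresponding identity satisfied by the strong solution. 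The left-hand side then reproduces, via the algebraic identity $(a-b)a = (a^2-b^2)/2 + (a-b)^2/2$, the telescoping energy quantity $\PDt\big(\tfrac{\vrf}{2}\intOref{\eta_h^k|\delta_\vu^k|^2} + \tfrac{\vrs}{2}\|\delta_\xi^k\|^2 + \tfrac{\gamma_1}{2}\|\pdx\delta_\eta^{k+1}\|^2 + \tfrac{\gamma_2}{2}\|\delta_\zeta^{k+1}\|^2\big)$ plus the coercive viscous term $2\mu\intOref{\eta_h^k|\Grad\delta_\vu^k(\Jacob_h^k)^{-1}|^2}$ and nonnegative numerical dissipation, exactly as in \eqref{k3}; the right-hand side collects all the consistency residuals.

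The bulk of the work is then estimating the residual terms, of which there are several families: (i) the time-discretization residuals $\vrf\intOref{(\detJ_h^k\PDt\hvu^k - \detJ\pdt\hvu)\cdot\hbfphi}$ and the analogous structure terms, controlled by $\TS\|\pdtt\hvu\|_{L^2}$, $\TS\|\pdtt\eta\|$ using \eqref{STClass}; (ii) the geometry/ALE residuals coming from $\eta_h^k$ vs.\ $\eta^k$ inside $\M_h^k$, $\Jacob_h^k$, $\detJ_h^k$, which I would expand by Lipschitz dependence of $\Jacob^{-1}$, $\detJ$ on $\eta$ and bound using $\|e_\eta\|_{W^{1,2}}$ together with the uniform bounds of Corollary~\ref{co_est} and the assumption \eqref{noc}; (iii) the interpolation residuals $I_p^k$, $I_\vu^k$, $I_\xi^k$, $I_\eta^k$, $I_\zeta^k$, bounded by $h$ times Sobolev norms of the strong solution via the estimates around \eqref{iu}, \eqref{RieszE} and the $\Piq$ bound; (iv) the convective-term residuals from the skew-symmetric trilinear form, where one must carefully use the skew-symmetry to avoid a loss of derivatives, interpolate $\|\delta_\vu\|_{L^4}$ by $\|\Grad\delta_\vu\|_{L^2}^{1/2}\|\delta_\vu\|_{L^2}^{1/2}$ (Ladyzhenskaya in 2D) and absorb the resulting $\|\Grad\delta_\vu\|_{L^2}^2$ into the viscous term; and (v) the pressure term, handled by the discrete divergence-compatibility \eqref{P3} of $\PiF$ which kills the $\delta_p$ contribution against $\delta_\vu$ up to a geometry residual involving $\M(\eta_h^k)-\M(\eta^k)$. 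A subtle point is the coupling/no-slip mismatch: the scheme only enforces the modified condition \eqref{uv}, $\hvu_h^k = \txi_h^k\er$, so the structure test function must be chosen as $\delta_\vu^k\cdot\er + \tfrac{\TS^2}{\vrs}\Delta_\xi^k$ to make the interface terms in the fluid and structure subproblems cancel exactly, leaving only $\order(\TS^2)$ consistency terms, which is why the $D_{num1}$ terms reappear and must be kept.

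The main obstacle I anticipate is the treatment of the convective nonlinearity combined with the moving geometry: after substituting the test functions one gets a trilinear term of the form $\vrf\intOref{(\hbfphi\cdot\Grad\delta_\vu - \delta_\vu\cdot\Grad\hbfphi)\cdot(\Jacob_h^k)^{-1}\cdot\hvv_h^{k-1}\,\detJ_h^k}$ plus cross terms where $\hvv_h^{k-1}$, $\Jacob_h^k$, $\detJ_h^k$ are replaced by their continuous counterparts; bounding these uniformly in $\TS, h$ requires (a) the $L^2(0,T;L^{q_1})$ and $L^\infty(0,T;L^2)$ bounds on $\hvv_h$ from Corollary~\ref{co_est}, (b) a discrete-in-time argument to handle $\hvv_h^{k-1}$ at the previous step together with $\delta_\vu^k$ at the current step without an extra $\TS^{-1}$, and (c) careful use of the skew-symmetric structure so that the ``dangerous'' term $\delta_\vu\cdot\Grad\delta_\vu\cdot(\cdot)\cdot\delta_\vu$-type contribution is either zero by antisymmetry or absorbable by the viscous dissipation after a Ladyzhenskaya/Young splitting. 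Once all residuals are bounded by $C\,(\TS^2+h^2) + C\,\TS\sum_{j\le k}(\text{discrete energy of }\delta\text{-errors at step }j) + \tfrac{\mu}{2}\TS\intOref{\eta_h^k|\Grad\delta_\vu^k(\Jacob_h^k)^{-1}|^2}$, I would apply the discrete Gr\"onwall lemma over $k=1,\dots,m$ to obtain the $\order(\TS^2+h^2)$ bound on the $\delta$-errors; adding back the interpolation errors $I_\bullet = \order(h)$ via the triangle inequality and using $|e_\vu|^2 \le 2|\delta_\vu|^2 + 2|I_\vu|^2$ gives the stated estimate on $e_\vu, e_\xi, e_\eta, e_\zeta$, and taking square roots and sup/integral in time yields the $\order(\TS+h)$ convergence rates.
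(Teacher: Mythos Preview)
Your proposal is essentially the paper's proof: derive the error equation \eqref{EEQ} with residuals $R_1^k,\dots,R_7^k$, test with $(\hbfphi,\psi)=(\delta_\vu^k,\delta_\xi^k+\tfrac{\TS^2}{\vrs}\Delta_\xi^k)$, telescope the $\delta$-energy, bound the residuals (the paper defers $R_1^k$--$R_5^k$, $G_f$, and most of $G_s$ to \cite[Appendix~B.4]{SST}), and close by Gr\"onwall plus triangle inequality.

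Two clarifications. First, your test function is slightly off: by \eqref{verr} one already has $\delta_\vu^k\!\cdot\!\er\big|_\Sigma=\delta_\xi^k+\tfrac{\TS^2}{\vrs}\Delta_\xi^k$ with the \emph{full} scheme quantity $\Delta_\xi^k$ from \eqref{deltaxi}, so the admissible pair is simply $(\delta_\vu^k,\delta_\vu^k\!\cdot\!\er|_\Sigma)$ with no extra ``$\Delta_\xi^k[\delta]$'' correction; the correction is already baked into $\delta_\vu^k|_\Sigma$ via \eqref{uv}. Second, the convective term is not where the real difficulty of this paper lies (those estimates carry over from \cite{SST} since $\hvv_h^{k-1}$ is uniformly bounded by Corollary~\ref{co_est}, so no cubic $\delta_\vu$ term ever appears); the genuinely new ingredient is the lower bound on the term $G_{sL}$ in \eqref{REFB5} (Lemma~\ref{b5}), which produces $\tau^2$-weighted norms $\|\delta_\zeta^{k+1}\|^2$ and $\|\Laph\delta_\zeta^{k+1}\|^2$ that must be absorbed into an augmented energy $\widetilde{\delta}_E^k=\delta_E^k+\tfrac{\tau^2\gamma_1^2}{2\vrs}\|\delta_\zeta^{k+1}\|^2+\tfrac{\tau^2\gamma_2^2}{2\vrs}\|\Laph\delta_\zeta^{k+1}\|^2$ before Gr\"onwall can be applied.
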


\begin{proof}
First, we subtract the weak formulation \eqref{wf2_ref} from the numerical scheme \eqref{equstability} and get 
\begin{equation}\label{EEQ}
\begin{aligned}
& \intOref{  \vrf (\eta_h^k \PDt \euk  + \frac12 \PDt \eta_h^k \eu ^{k*}  ) \cdot \hbfphi  }  
+ 2 \mu \intOref{ \big( \Grad \euk  (\Jacob_h^k)^{-1} \big)^\rmS: \big(\Grad \hbfphi (\Jacob_h^k)^{-1} \big)\eta_h^k  } 
\\& 
+ \vrs\intS{\PDt e_\xi^k \psi}  
+a_s(e_\eta^{k+1},e_\zeta^{k+1},\psi)
= - \sum_{i=1}^7 R^k_i( \hbfphi, \psi),
\end{aligned}
\end{equation}
see Appendix~\ref{app_ee1} for the details.  Here,  
\begin{equation}\label{RS}
\begin{aligned}
&R^k_1( \hbfphi, \psi)=  \vrf \intOref{\big( e_\eta^k \pdt \hvu^k+ \eta_h^k (\PDt \hvu^k -\pdt\hvu^k ) \big) \cdot \hbfphi},
\\ & 
R^k_2( \hbfphi, \psi) = \frac12\vrf \intOref{\left( (e_\xi^{k-1} - \TS \PDt \xi^k )\hvu^{k*}- \TS \pdt \eta^k \PDt \hvu^k \right) \cdot \hbfphi},
\\& 
R^k_3( \hbfphi, \psi)=\frac12 \vrf \intOref{ \Big( \hbfphi \cdot   (\Grad \euk ) 
-\euk   \cdot   (\Grad \hbfphi) \Big)
 \cdot  (\Jacob_h^k)^{-1} \hvv_h^{k-1}  \eta_h^k}
\\& \quad 
+ \frac12 \vrf \intOref{ \Big( \hbfphi \cdot (\Grad \hvu^k) - \hvu^k \cdot (\Grad\hbfphi) \Big) \cdot \left(    (\Jacob_h^k)^{-1} \hvv_h^{k-1}  \eta_h^k-   (\Jacob^k)^{-1}\hvv^{k} \eta^k \right)  }, 
 \\& 
R^k_4( \hbfphi, \psi) 
=\intOref{  e_p^k \Grad \hbfphi : \Mhk } 
+ \intOref{ \hp^k \Grad \hbfphi : \big(\Mhk -  \Mk\big)} ,
\\& 
R^k_5( \hbfphi, \psi)=2 \mu \intOrefB{\big( \Grad \hvu (\Jacob_h^k)^{-1} \big)^\rmS: (\Grad \hbfphi (\Jacob_h^k)^{-1} )\eta_h^k
-\big( \Grad \hvu (\Jacob)^{-1} \big)^\rmS: (\Grad \hbfphi (\Jacob^k)^{-1} )\eta^k} , 
\\& 
R^k_6( \hbfphi, \psi)=\vrs\intS{(\PDt \xi^k -\pdt \xi^k  ) \psi} ,
\\&
R^k_7( \hbfphi, \psi)= - \gamma_1 \intS{\Lapx (\eta^{k+1} -\eta^k) \; \psi}
- \gamma_2 \intS{\Lapx (\zeta^{k+1} -\zeta^k)\; \psi}.
\end{aligned}
\end{equation}

Note that on $\Sigma$ it holds
\begin{equation}\label{verr}
\begin{aligned}
\delta_\vu^k &=\hvu_h^k-\PiF \hvu^k = \txi_h^k \er -\PiF\hvu^k= (\xi_h^k  + \frac{\TS^2}{\vrs}\Delta_\xi^k)\er -\PiF\hvu^k= (\delta_\xi^k  +\frac{\TS^2}{\vrs}\Delta_\xi^k)\er, \\
\end{aligned}
\end{equation}
where we have used the property of the projection $\PiF$ that
\begin{equation*}
\Riesz\xi^k\er=\PiF\hvu^k|_\Sigma.
\end{equation*}
We can now proceed with the proof by setting $\hbfphi = \delta_\vu^k$ and $
\psi=\delta_\xi^k  + \frac{\TS^2}{\vrs}\Delta_\xi^k
$  in \eqref{EEQ}. Note that they are a pair of admissible test functions since \eqref{verr} yields $\hbfphi|_\Sigma=\psi\er$.
Then sum up from $k=1$ to $m$ we derive 
\begin{equation*}
\begin{aligned}
-&  \TS \summ  \sum_{i=1}^7 R^k_i( \delta_\vu^k, \delta_\xi^k  + \frac{\TS^2}{\vrs}\Delta_\xi^k)
\\ =&  \TS \summ  \intOref{  \vrf (\eta_h^k \PDt \euk  + \frac12 \PDt \eta_h^k \eu ^{k*}  ) \cdot \delta_\vu^k  }  
+ 2 \mu  \TS \summ  \intOref{ \big( \Grad \euk  (\Jacob_h^k)^{-1} \big)^\rmS: \big(\Grad \delta_\vu^k (\Jacob_h^k)^{-1} \big)\eta_h^k  } 
\\& 
+  \TS \summ  \vrs\intS{\PDt e_\xi^k (\delta_\xi^k  + \frac{\TS^2}{\vrs}\Delta_\xi^k)}  
+ \TS \summ  a_s(e_\eta^{k+1},e_\zeta^{k+1},\delta_\xi^k  + \frac{\TS^2}{\vrs}\Delta_\xi^k).
\end{aligned}
\end{equation*}

Further, applying  \eqref{IM6} to the above right-hand-side, we reformulate the above equality as 
\begin{equation}\label{REI1}
\begin{aligned}
& -  \TS \summ  \sum_{i=1}^7 R^k_i( \delta_\vu^k, \delta_\xi^k  + \frac{\TS^2}{\vrs}\Delta_\xi^k)
=
 \TS \summ \intOref{  \vrf \big(\eta_h^k \PDt (\delta_\vu^k+I_\vu^k)  + \frac12 \PDt \eta_h^k (\delta_\vu^{k*}+I_\vu^{k*})  \big) \cdot \delta_\vu^k  }  
\\& \quad 
+ 2 \mu  \TS \summ \intOref{   \big( \Grad (\delta_\vu^k+I_\vu^k)  (\Jacob_h^k)^{-1} \big)^\rmS: (\Grad \delta_\vu^k (\Jacob_h^k)^{-1} ) \eta_h^k }  
\\
&+  \TS \summ  \vrs\intS{\PDt (\delta_\xi^k+I_\xi^k) (\delta_\xi^k  +\frac{\TS^2}{\vrs}\Delta_\xi^k)} + \TS \summ  a_s(e_\eta^{k+1},e_\zeta^{k+1},\delta_\xi^k  + \frac{\TS^2}{\vrs}\Delta_\xi^k)
\\& = \delta_E^m - \delta_E^0  + \TS \summ  \delta_D^k+ \TS \summ \delta_N^k + G_f+ G_s+G_{sL},
\end{aligned}
\end{equation}
where 
\begin{align}
\delta_E^k =&   \intOref{ \frac12 \vrf \eta_h^k  |\delta_{\vu}^k|^2  } 
  + \frac12 \intSB{  \vrs |\delta_\xi^k|^2+ \gamma_1 |\pdx \delta_\eta^{k+1}|^2  
	+ \gamma_2|\delta_\zeta^{k+1}|^2 },
\notag\\
 \delta_D^k = &   2 \mu \intOref{ \eta_h^k |\big(\Grad \delta_{\vu}^k (\Jacob_h^k)^{-1}\big)^\rmS|^2 },
\notag\\
 \delta_N^k = &
 \frac{\TS}{2}\vrf  \intOref{  \eta_h^{k-1}  |\PDt \delta_{\vu}^k|^2  } 
 +	 \frac{\TS}{2} \intSB{\vrs|\PDt \delta_\xi^k|^2+ \gamma_1|\PDt \pdx \delta_\eta^{k+1}|^2+\gamma_2 |\PDt \delta_\zeta^{k+1}|^2} \geq 0,
\notag\\
G_f =&
\TS \summ  \intOref{  \vrf \big(\eta_h^k \PDt I_\vu^k  + \frac12 \PDt \eta_h^k I_\vu^{k*} \big) \cdot \delta_\vu^k  }  
\notag\\&  + 2 \mu \TS \summ   \intOref{   \big( \Grad I_\vu^k  (\Jacob_h^k)^{-1} \big)^\rmS: (\Grad \delta_\vu^k (\Jacob_h^k)^{-1} ) \eta_h^k }  ,
\notag\\ G_s=&
\gamma_1 \TS \summ \intS{\pdx \delta_\eta^{k+1} \pdx (\PDt \eta^{k+1} - \pdt\eta^k )} -
\gamma_2 \TS \summ\intS{ \delta_\zeta^{k+1} \Lapx (\PDt \eta^{k+1} - \pdt\eta^k )}
\notag\\&+\TS \summ  \intS{\vrs \PDt I_\xi^k  (\delta_\xi^k  + \frac{\TS^2}{\vrs}\Delta_\xi^k)},
\notag\\  
G_{sL}=&  \TS^3 \summ \left( \intS{\PDt \delta_\xi^k\Delta_\xi^k}+  \frac{1}{\vrs}a_s(\delta_\eta^{k+1},\delta_\zeta^{k+1}, \Delta_\xi^k)\right).\label{REFB5}
\end{align}
Next, we reformulate \eqref{REI1} in the following form. 
\begin{equation}\label{REI}
\delta_E^m - \delta_E^0
+  \TS \summ  \delta_D^k
   +\TS\summ  D_{num}^k+G_{sL}
=  - \TS \summ \sum_{i=1}^7 R^k_i -G_f - G_s.
\end{equation}

Then, by Young's inequality, H\"older's inequality, the interpolation error in Theorem~\ref{thm:projection-velocity}, and the uniform bounds \eqref{ests}, we estimate the right-hand-side of the above equation as
\begin{multline}\label{res}
\Abs{ \TS \summ \sum_{i=1}^7 R^k_i + G_f + G_s} 
\aleq \TS^2 + h^2  
+ c \TS \summ \delta_E^k 
+ 2 \alpha  \mu   \TS \summ \intOref{\left| \Grad \delta_\vu^k (\Jacob_h^k)^{-1}\right|^2 \eta_h^k}
\\ +\TS^3 \summ  \Big( \norm{\delta_\zeta^{k+1} }_{L^2(\Sigma)}^2+\norm{\Laph \delta_\zeta^{k+1}}_{L^2(\Sigma)}^2 \Big),
\end{multline}
see Appendix \ref{app_res}. 

Further, substituting the above estimate into \eqref{REI} and noticing the initial error $\delta_E^0=0$, owing to the estimate of $G_{sL}$ stated in Lemma \ref{b5}, the lower bounds of $\eta$ and $\eta_h$, and by denoting
$$
\widetilde{\delta}_E^k=\delta_E^k+\frac{\tau^2\gamma_1^2}{2\vrs}\norm{\delta_\zeta^{m+1}}^2_{L^2(\Sigma)} +\frac{\tau^2\gamma_2^2}{2\vrs}\norm{\Laph\delta_\zeta^{m+1}}^2_{L^2(\Sigma)},
$$
we get 
\begin{align*}
\widetilde{\delta}_E^m  + (1&-\alpha) 2 \mu \TS \summ \intOref{\left| \Grad \delta_\vu^k (\Jacob_h^k)^{-1}\right|^2 \eta_h^k }\aleq \TS^2 +h^2 + \TS \summ \widetilde{\delta}_E^k. 
\end{align*}
By choosing any $\alpha\in(0,1)$ and using Gronwall's inequality, we get
\begin{align*}
\delta_E^m  +  \TS \summ \delta_D^k\leq \widetilde{\delta}_E^m +  \TS \summ \delta_D^k
\aleq \TS^2 +h^2 . 
\end{align*}
Recalling the interpolation errors (see Section \ref{Sec_wf}) and the regularity of the strong solution \eqref{STClass} we get
\begin{multline*}
 \frac12 \vrf \intOref{|I_\vu^m|^2 \eta_h^m}   + \frac12  \intSB{ \vrs \abs{I_\xi^m}^2 +  \gamma_1 |\pdx I_\eta^{m+1} |^2 +  \gamma_2 \abs{ I_\zeta^{m+1} }^2}
\\ +     \TS \summ \left( 2 \mu\intOref{\left| \Grad I_\vu^k (\Jacob_h^k)^{-1}\right|^2 \eta_h^k }  \right)
 \aleq  h^2.
\end{multline*}

Finally, due to the triangular inequality, we sum up the previous two estimates and get
\begin{multline*}
 \frac12 \vrf \intOref{|e_\vu^m|^2 \eta_h^m}   + \frac12  \intSB{ \vrs \abs{e_\xi}^2 +  \gamma_1 |\pdx e_\eta^{m+1} |^2 +  \gamma_2 \abs{ e_\zeta^{m+1}}^2}
\\ +     \TS \summ \left( 2 \mu\intOref{\left| \Grad e_\vu^k (\Jacob_h^k)^{-1}\right|^2 \eta_h^k }  \right)
\quad \aleq
\TS^2 +h^2.
\end{multline*}
Note that the above proof is valid on the time interval $(0, T)$ under the assumption $\eta_h^k > \underline{\eta} > 0$. Following the methodology in the proof of \cite[Theorem~6.2]{SST}, one may deduce via the mathematical induction that if $\eta_h(0) > \underline{\eta}$, then \eqref{noc} remains valid as long as the smooth solution exists and satisfies the lower bound $\eta > \underline{\eta}$.
\end{proof}
\section{Numerical experiments}
\label{sec:num}
In this section, we evaluate the numerical performance of scheme \eqref{SKM_ref} and verify the theoretical convergence rate. The scheme is implemented using the Firedrake finite element package \cite{Firedrake}; 
the code is publicly available in \cite{zenodo:code}.

\subsection{Experiment 1: Large deformation driven by external force}

The computational domain $\Oref$ is a rectangle of size $2\times1$ with periodic boundary conditions in the $\xx$-direction. 
At the bottom boundary, we impose the no-slip condition $\mathbf{u} = 0$. 
At the top boundary, the velocity in the $x_1$-direction is set to zero, while in the $x_2$ direction a time-dependent force is applied. 
All unknowns are initialized to zero at $t=0$. 
We set $\mu = 0.01$, $\vrf = \vrs = 1$, and $\gamma_1 = \gamma_2 = 0.1$. 
The flow is driven by an external force $g$ periodic in $x_1$, applied at the top boundary up to $t = 0.2$ to generate a large structural deformation. 
After $t = 0.2$, the force is switched off and the system relaxes. 
The force is given by
$$
g(x_1, t) = 
\begin{cases}
200\, t \sin(2\pi x_1), & t \le 0.2,\\[4pt]
0, & t > 0.2.
\end{cases}
$$
Snapshots of the simulation are shown in Figure~\ref{fig:snapshots}.

\begin{figure}[!htbp]
\centering
\includegraphics[width=5.4cm]{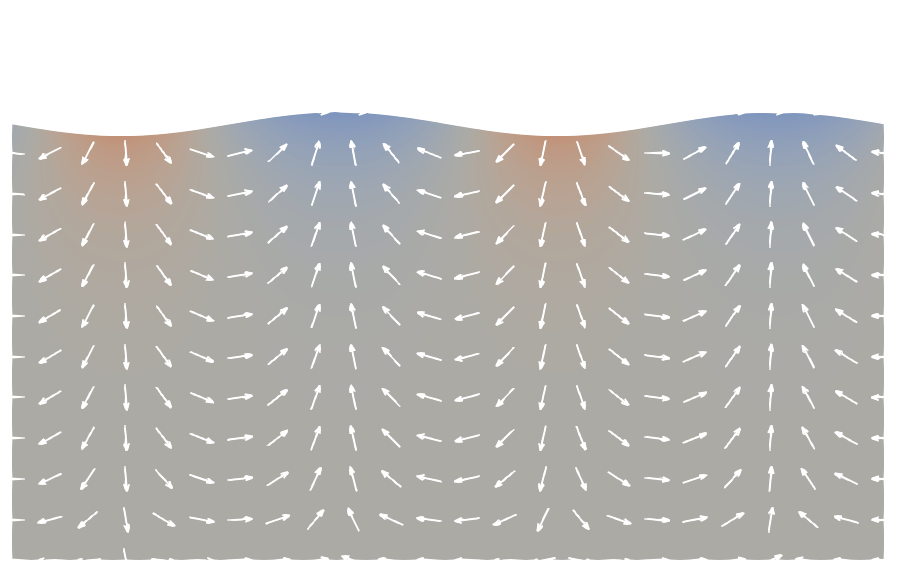}\hspace*{0.4cm}
\includegraphics[width=5.4cm]{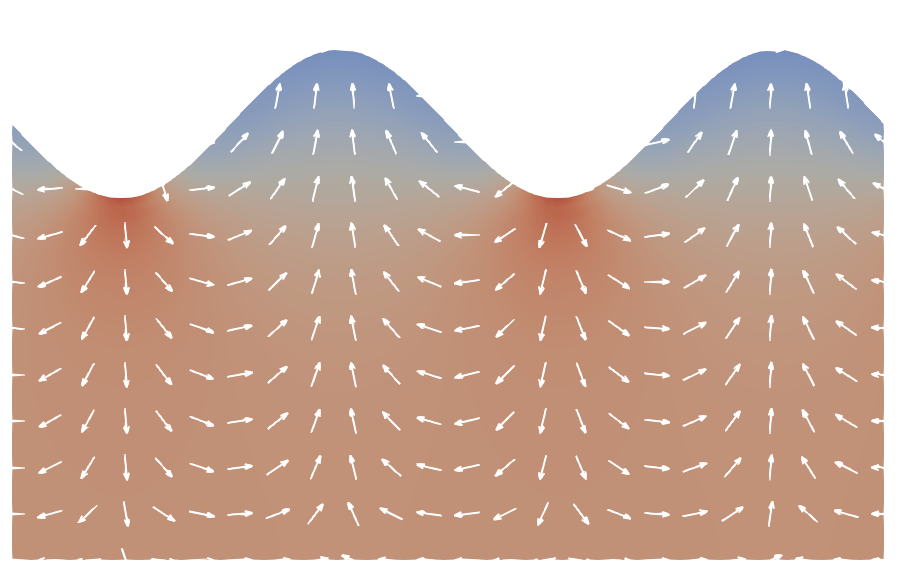}\hspace*{0.4cm}
\includegraphics[width=5.4cm]{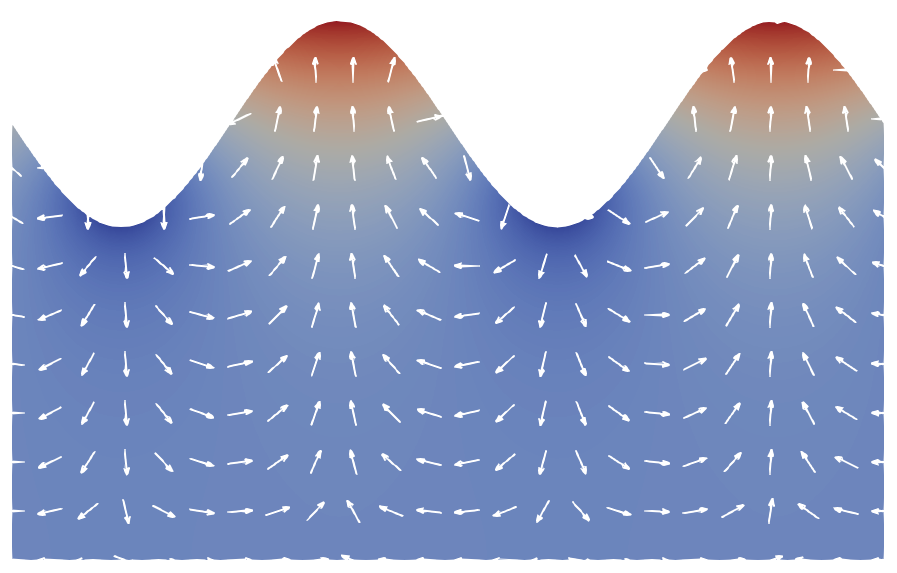}\\[-1pt]
$t=0.1$\hspace*{4.8cm}$t=0.2$\hspace*{4.8cm}$t=0.25$\\[5pt]
\includegraphics[width=5.4cm]{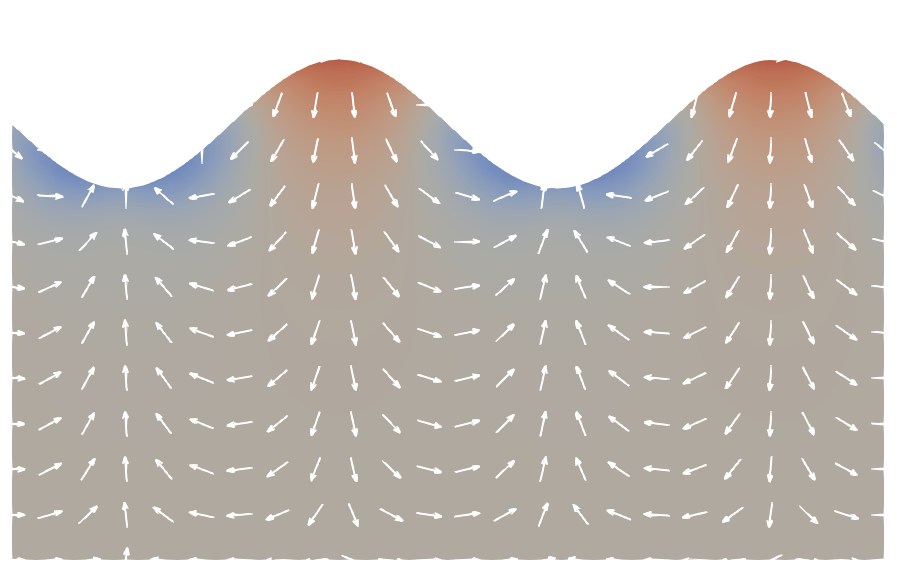}\hspace*{0.4cm}
\includegraphics[width=5.4cm]{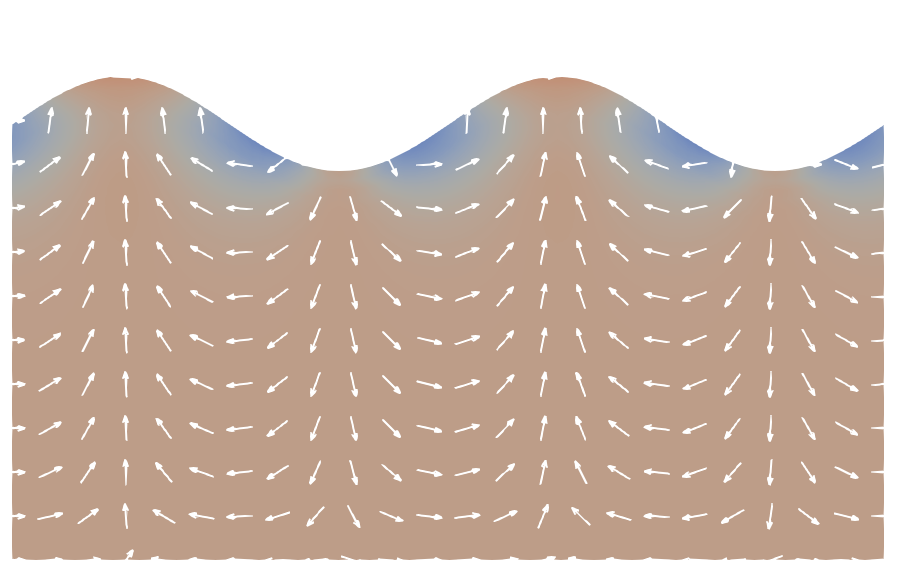}\hspace*{0.4cm}
\includegraphics[width=5.4cm]{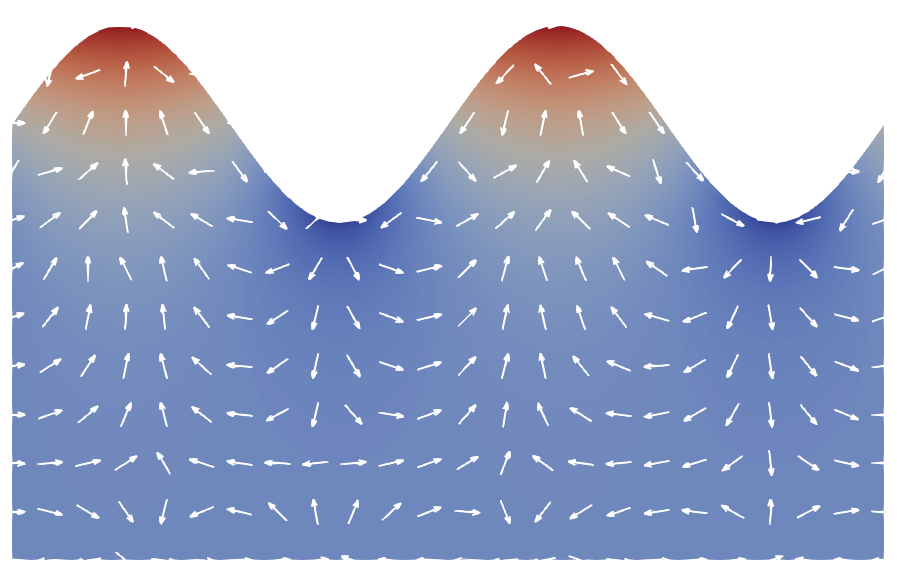}\\[-1pt]
$t=0.35$\hspace*{4.8cm}$t=0.45$\hspace*{4.8cm}$t=0.55$\\[5pt]
\caption{Snapshots of the simulation at different time instants. The color scale depicts pressure, arrows show the direction of the velocity field.}
\label{fig:snapshots}
\end{figure}

\subsection{Experiment 2: Convergence rates}

Simulations are carried out for $t \in [0,1]$ using six time steps 
$\TS = 5\times10^{-3},\, 2.5\times10^{-3},\, 1.25\times10^{-3},\, 6.25\times10^{-4},\, 3.125\times10^{-4}$, 
and $\TS_{\rm min} = 10^{-4}$, 
on six meshes with characteristic sizes 
$h = 2.83\times10^{-1},\, 1.41\times10^{-1},\, 7.07\times10^{-2},\, 3.54\times10^{-2},\, 1.77\times10^{-2}$, 
and $h_{\rm min} = 8.84\times10^{-3}$. 
The solution computed on the finest mesh with the smallest time step is taken as the reference solution.

For each mesh refinement and time step, the numerical solutions are compared to the reference solution by evaluating all terms appearing on the right-hand side of Theorem~\ref{theorem_conv_rate}, namely
$$
\|e_\vu\|_{L^\infty(L^2)}, \quad 
\|e_{\xi}\|_{L^\infty(L^2)}, \quad 
\|e_{\eta}\|_{L^\infty(L^2)}, \quad 
\|\nabla e_{\eta}\|_{L^\infty(L^2)}, \quad 
\|e_{\zeta}\|_{L^\infty(L^2)}, \quad 
\|\nabla e_\vu\|_{L^2(L^2)}.
$$

The convergence with respect to the mesh size $h$ (using the finest time step) is summarized in Table~\ref{tab:optimal-bilaplace_h}, 
while the convergence with respect to the time step $\TS$ ( using the finest mesh) is presented in Table~\ref{tab:optimal-bilaplace_t}.
Corresponding convergence plots are shown in Figure~\ref{fig:conv_h_t}. 
The results indicate linear convergence with respect to the time step for all error quantities, 
and quadratic convergence in space for 
$\|e_\vu\|_{L^\infty(L^2)}$, 
$\|e_{\xi}\|_{L^\infty(L^2)}$, 
$\|e_{\eta}\|_{L^\infty(L^2)}$, and 
$\|e_{\zeta}\|_{L^\infty(L^2)}$, 
while 
$\|\nabla e_\vu\|_{L^2(L^2)}$ and 
$\|\nabla e_{\eta}\|_{L^\infty(L^2)}$ 
exhibit linear spatial convergence. 

Hence, for this particular example, the observed convergence rates are higher than those predicted by Theorem~\ref{theorem_conv_rate}, which establishes only linear convergence in both space and time.

\pgfkeys{
/pgf/number format/.cd,
sci,
sci zerofill,
sci generic={mantissa sep=\times,exponent={10^{#1}}}}

\pgfplotstableset{
create on use/uLiL2quotlong/.style={create col/expr={\thisrow{uLiL2}/1.17e+00}},
create on use/xiLiL2quotlong/.style={create col/expr={\thisrow{xiLiL2}/2.75e-00}},
create on use/etaLiL2quotlong/.style={create col/expr={\thisrow{etaLiL2}/2.17e-01}},
create on use/gradetaLiL2quotlong/.style={create col/expr={\thisrow{gradetaLiL2}/1.37e-00}},
create on use/LapetaLiL2quotlong/.style={create col/expr={\thisrow{LapetaLiL2}/8.99e-00}},
create on use/graduL2L2quotlong/.style={create col/expr={\thisrow{graduL2L2}/1.22e+01}},
create on use/timeuLiL2quotlong/.style={create col/expr={\thisrow{uLiL2}/2.51e-01}},
create on use/timexiLiL2quotlong/.style={create col/expr={\thisrow{xiLiL2}/5.49e-01}},
create on use/timeetaLiL2quotlong/.style={create col/expr={\thisrow{etaLiL2}/4.20e-02}},
create on use/timegradetaLiL2quotlong/.style={create col/expr={\thisrow{gradetaLiL2}/2.64e-01}},
create on use/timeLapetaLiL2quotlong/.style={create col/expr={\thisrow{LapetaLiL2}/1.66e-00}},
create on use/timegraduL2L2quotlong/.style={create col/expr={\thisrow{graduL2L2}/1.59e-00}},
create on use/timegraduL2L2quotcomp/.style={create col/expr={\thisrow{graduL2L2}/1.60e+00}},
create on use/timegradetaLiL2quotcomp/.style={create col/expr={\thisrow{gradetaLiL2}/2.65e-01}},
columns/h/.style={int detect,column type=c, fixed zerofill, precision=2, column type/.add={|}{|}, column name=$h$},
columns/dt/.style={int detect,column type=c, fixed zerofill, precision=2, column type/.add={|}{|}, column name=$\TS$},
columns/uLiL2/.style={int detect,column type=c, fixed zerofill, precision=2, column type/.add={}{|}, column name=$\|e_\vu\|_{L^\infty(L^2)}$},
columns/xiLiL2/.style={int detect,column type=c, fixed zerofill, precision=2, column type/.add={}{|}, column name=$\|e_{\xi}\|_{L^\infty(L^2)}$},
columns/etaLiL2/.style={int detect,column type=c, fixed zerofill, precision=2, column type/.add={}{|}, column name=$\|e_{\eta}\|_{L^\infty(L^2)}$},
columns/gradetaLiL2/.style={column type=c, fixed zerofill, precision=2, column type/.add={}{|}, column name=$\|\nabla e_{\eta}\|_{L^\infty(L^2)}$},
columns/LapetaLiL2/.style={int detect,column type=c, fixed zerofill, precision=2, column type/.add={}{|}, column name=$\|e_{\zeta}\|_{L^\infty(L^2)}$},
columns/graduL2L2/.style={int detect,column type=c, fixed zerofill, precision=2, column type/.add={}{|}, column name=$\|\nabla e_\vu\|_{L^2(L^2)}$},
columns/gradxiL2L2/.style={int detect,column type=c, fixed zerofill, precision=2, column type/.add={}{|}, column name=$\|\nabla e_{\xi}\|_{L^2(L^2)}$},
empty cells with={--}, 
every head row/.style={before row=\hline,after row=\hline},
every last row/.style={after row=\hline},
}

\newcommand{\Convergence}[3]{
\pgfplotsextra{
\pgfmathsetmacro{\ax}{0.5}
\pgfmathsetmacro{\ay}{1}
\pgfmathsetmacro{\bx}{1.5}
\pgfmathsetmacro{\by}{(3.0^#3)}
\pgfmathsetmacro{\slope}{(3.0^#3)}
\coordinate (a) at (axis direction cs:\ax*#1,\ay*#2);
\coordinate (b) at (axis direction cs:\bx*#1,\by*#2);
\draw (a) -- (b) (a) -| (b) node [pos=0.25,anchor=north] {\small 1} node [pos=0.95,anchor=west] {\small order #3};
}
} 
\begin{table}[!htbp]
\caption{Convergence of errors with mesh refinement (using fixed time step $\TS=\TS_{\rm min}$); reference solution: $h_{\rm min}=8.84\times 10^{-3}$, $\TS_{\rm min}=10^{-4}$.}
\label{tab:optimal-bilaplace_h}
  \begin{center}{
    \pgfplotstabletypeset[font={\small}]{optimal-h_errors_bilaplace-Gamma.txt}}
\end{center}
\end{table}
\begin{table}[!htbp]
\caption{Convergence of errors with time step refinement (using fixed mesh size $h=h_{\rm min}$); reference solution: $h_{\rm min}=8.84\times 10^{-3}$, $\TS_{\rm min}=10^{-4}$.}
\label{tab:optimal-bilaplace_t}
  \begin{center}{
    \pgfplotstabletypeset[font={\small}]{optimal-t_errors_bilaplace-Gamma.txt}}
\end{center}
\end{table}

\begin{figure}[!htbp]
\centering
\begin{minipage}{0.49\textwidth}
\begin{tikzpicture}[scale=0.99]
\begin{loglogaxis}[
    width=\textwidth,
    ylabel={Errors to reference solution},
    xlabel={$h$},
    legend to name=legend:conv_h_t,  
    legend columns=3,
    every axis legend/.append style={text=black}, 
    legend style={font=\small, draw=black, fill=white, cells={anchor=west}, row sep=1ex, column sep=1em},
]
\addplot+[mark=o, thick, black] table[x=h,y={uLiL2quotlong}] {optimal-h_errors_bilaplace-Gamma.txt};
\addlegendentry{$\|e_\vu\|_{L^\infty(L^2)}$}

\addplot+[mark=square, thick, blue] table[x=h,y={xiLiL2quotlong}] {optimal-h_errors_bilaplace-Gamma.txt};
\addlegendentry{$\|e_{\xi}\|_{L^\infty(L^2)}$}

\addplot+[mark=x, thick, green] table[x=h,y={etaLiL2quotlong}] {optimal-h_errors_bilaplace-Gamma.txt};
\addlegendentry{$\|e_{\eta}\|_{L^\infty(L^2)}$}

\addplot+[mark=asterisk, thick, orange] table[x=h,y={gradetaLiL2quotlong}] {optimal-h_errors_bilaplace-Gamma.txt};
\addlegendentry{$\|\nabla e_{\eta}\|_{L^\infty(L^2)}$}

\addplot+[mark=+, thick, red] table[x=h,y={LapetaLiL2quotlong}] {optimal-h_errors_bilaplace-Gamma.txt};
\addlegendentry{$\|e_{\zeta}\|_{L^\infty(L^2)}$}

\addplot+[mark=otimes, thick, cyan] table[x=h,y={graduL2L2quotlong}] {optimal-h_errors_bilaplace-Gamma.txt};
\addlegendentry{$\|\nabla e_\vu\|_{L^2(L^2)}$}

\Convergence{0.9e-1}{1.5e-2}{2};
\Convergence{0.9e-1}{1.5e-2}{1};
\end{loglogaxis}
\end{tikzpicture}
\end{minipage}%
\hspace{0.01\textwidth}%
\begin{minipage}{0.49\textwidth}
\begin{tikzpicture}[scale=0.99]
\begin{loglogaxis}[
    width=\textwidth,
    ylabel={Errors to reference solution},
    xlabel={$\TS$},
]
\addplot+[mark=o, thick, black] table[x=dt,y={timeuLiL2quotlong}] {optimal-t_errors_bilaplace-Gamma.txt};
\addplot+[mark=square, thick, blue] table[x=dt,y={timexiLiL2quotlong}] {optimal-t_errors_bilaplace-Gamma.txt};
\addplot+[mark=x, thick, green] table[x=dt,y={timeetaLiL2quotlong}] {optimal-t_errors_bilaplace-Gamma.txt};
\addplot+[mark=asterisk, thick, orange] table[x=dt,y={timegradetaLiL2quotlong}] {optimal-t_errors_bilaplace-Gamma.txt};
\addplot+[mark=+, thick, red] table[x=dt,y={timeLapetaLiL2quotlong}] {optimal-t_errors_bilaplace-Gamma.txt};
\addplot+[mark=otimes, thick, cyan] table[x=dt,y={timegraduL2L2quotlong}] {optimal-t_errors_bilaplace-Gamma.txt};

\Convergence{1.4e-3}{12e-2}{1};
\end{loglogaxis}
\end{tikzpicture}
\end{minipage}\\[5pt]
\centerline{\ref{legend:conv_h_t}}

\caption{Mesh (left) and timestep (right) convergence for $\|e_\vu\|_{L^\infty(L^2)}$, $\|e_{\xi}\|_{L^\infty(L^2)}$, $\|e_{\eta}\|_{L^\infty(L^2)}$, $\|\nabla e_{\eta}\|_{L^\infty(L^2)}$, $\|e_{\zeta}\|_{L^\infty(L^2)}$ and $\|\nabla e_\vu\|_{L^2(L^2)}$. For a better comparison, the plots of the errors are shifted to start from the same point.}
\label{fig:conv_h_t}
\end{figure}
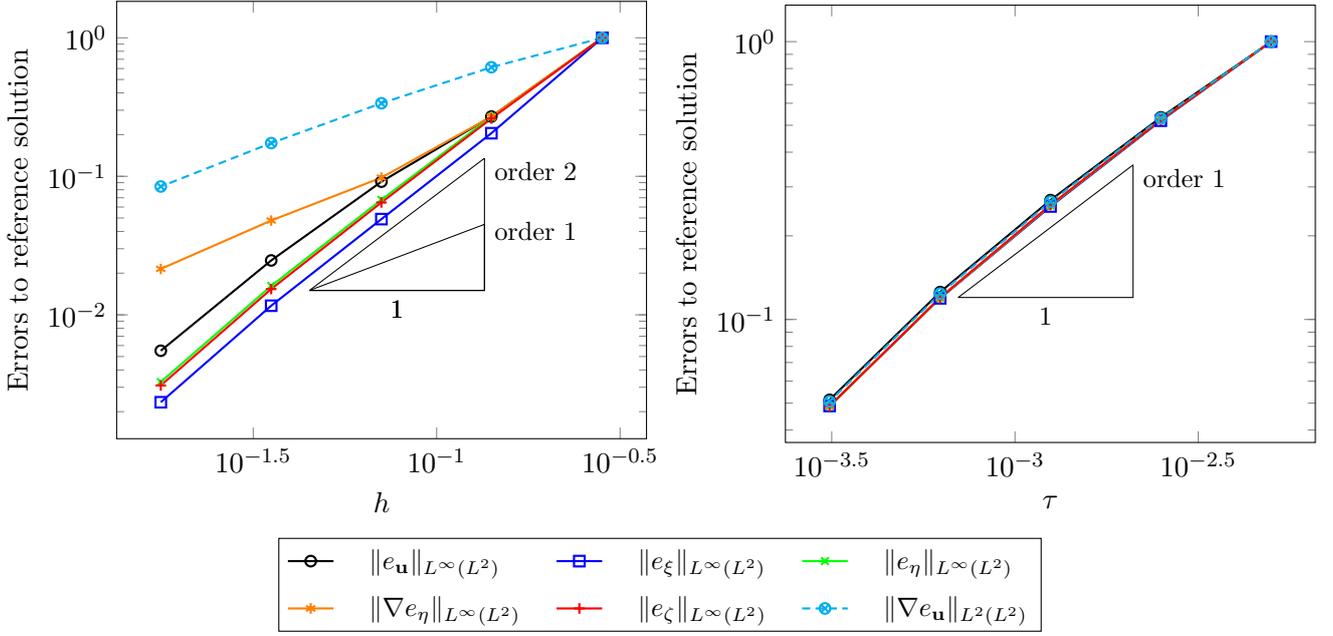

\subsection{Experiment 3: Comparison with a monolithic scheme from \cite{SST}}

The numerical scheme \eqref{SKM_ref} exhibits linear convergence in time and quadratic convergence in velocity. It is consistent with the rates observed for the monolithic Scheme-R \cite{SST}. The main difference lies in the algorithmic structure:
Scheme-R has two steps. Step 1 solves monolithically for the fluid and solid, finding $(\mathbf{u},\zeta,p)$ in $\Oref$, and Step 2 extends the displacement $\eta$ to the whole domain by solving a Laplace equation.  

In contrast, the current scheme \eqref{SKM_ref} does not solve the fluid-structure interaction monolithically. Instead, it splits the problem: Step 1 solves for the fluid $(\mathbf{u},p)$ in $\Oref$, Step 2 solves for the solid $(\zeta,\xi)$ on $\widehat{\Gamma}_S$, and Step 3 extends the displacement $\eta$ to the whole domain.  

On the finest mesh, Step~1 of Scheme-R involves 410\,880 degrees of freedom (DOFs), while the splitting scheme~\eqref{SKM_ref} uses 359\,360 DOFs in Step~1 and 640 DOFs in Step~2, thus formally reducing the size of the subproblems to be solved. 

On the same finest mesh, we compare the errors $\|\nabla e_{\bf u}\|_{L^2(L^2)}$ and $\|\nabla e_{\eta}\|_{L^\infty(L^2)}$ for several time steps $\TS$. As in the previous subsection, the errors are measured against the reference solution, here obtained by the monolithic Scheme-R on the finest mesh $h_{\rm min}=8.84\times 10^{-3}$ with the smallest time step $\TS=10^{-4}$. The convergence in time is shown in Figure~\ref{comparison_monolithic_split}, where both schemes exhibit the same rate, with the splitting scheme differing only by a constant. The monolithic scheme was re-implemented in Firedrake to ensure a fair comparison.

Although the splitting formulation reduces the number of unknowns in each subproblem, the total CPU time in this 2D-1D configuration is not lower than that of the monolithic scheme. The additional interpolation and projection steps between $\Oref$ and $\Sigma$ introduce extra computational overhead, which offsets the advantage of solving smaller systems. In higher-dimensional settings (e.g., a three-dimensional fluid coupled with a two-dimensional structure), however, the splitting approach is expected to yield a more significant computational benefit.

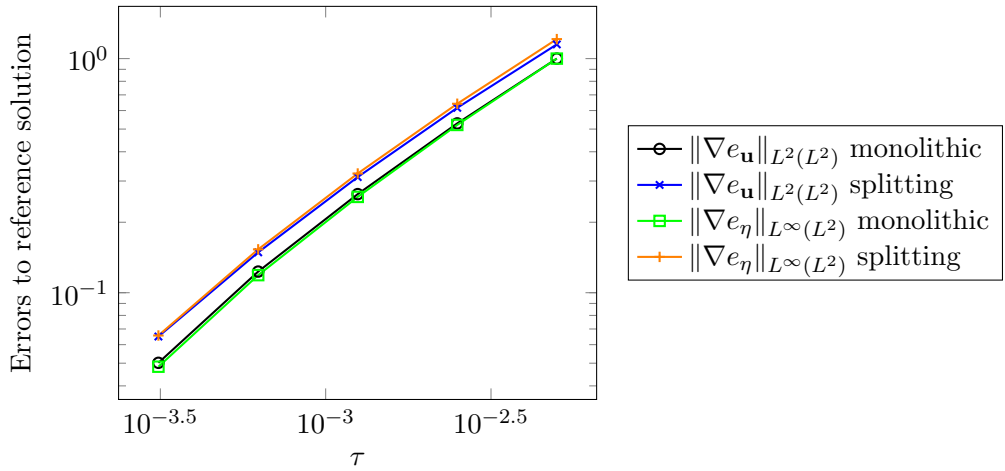
\begin{figure}[!htbp]
\begin{center}
\begin{tikzpicture}[scale=0.98]
\begin{loglogaxis}[
    width=8.0cm,
    ylabel={Errors to reference solution},
    xlabel={$\TS$},
    legend style={
    at={(1.85,0.5)},
    anchor=east,
    cells={anchor=west}},
    ]
\addplot+[mark=o, thick, black] table[x=dt,y={timegraduL2L2quotcomp}] {monolithic-t_errors_bilaplace.txt};
    \addlegendentry{$\|\nabla e_\vu\|_{L^2(L^2)}$ monolithic}
\addplot+[mark=x, thick, blue] table[x=dt,y={timegraduL2L2quotcomp}] {optimal-t_errors_bilaplace_compare.txt};
    \addlegendentry{$\|\nabla e_\vu\|_{L^2(L^2)}$ splitting}
\addplot+[mark=square, thick, green] table[x=dt,y={timegradetaLiL2quotcomp}] {monolithic-t_errors_bilaplace.txt};
    \addlegendentry{$\|\nabla e_{\eta}\|_{L^\infty(L^2)}$ monolithic}
\addplot+[mark=+, thick, orange] table[x=dt,y={timegradetaLiL2quotcomp}] {optimal-t_errors_bilaplace_compare.txt};
    \addlegendentry{$\|\nabla e_{\eta}\|_{L^\infty(L^2)}$ splitting}

\end{loglogaxis}
\end{tikzpicture}
\caption{Timestep convergence comparison monolithic vs. splitting scheme for $\|\nabla e_\vu\|_{L^2(L^2)}$ and $\|\nabla e_{\eta}\|_{L^\infty(L^2)}$.}\label{comparison_monolithic_split}
\end{center}
\end{figure}

\section{Conclusion}
In this work, we proposed and analyzed a linear, partitioned finite element scheme for the interaction between an incompressible viscous fluid and a thin deformable structure. The method allows the fluid and structure subproblems to be solved independently, while maintaining implicit velocity coupling at the interface, which ensures stability at the discrete level. Our analysis demonstrates unconditional energy stability and establishes optimal convergence rates in both space and time, all without relying on the assumptions of infinitesimal structural deformations or neglecting nonlinear fluid convection. Beyond the theoretical results, numerical experiments confirmed the predicted convergence orders and illustrated the robustness and practical effectiveness of the proposed partitioned approach.



\section*{\centering Funding}
The work of B. She is supported by National Natural Science Foundation of China (Grant No.12201437). 
K. Tůma has been supported by the project No. 23-05207S financed by the Czech Science Foundation, Czech Republic (GAČR) and by Charles University Research Centre, Czech Republic program No. UNCE/24/SCI/005. 
The work of T. Tian is supported by National Natural Science Foundation of China (Grant No.12401508).

\bibliographystyle{siamplain}

\appendix


\section{Appendix: Useful equalities and estimates}
\subsection{Proof of the error equation \eqref{EEQ}}\label{app_ee1}
In this part, we show how to obtain the equation \eqref{EEQ} satisfied by the errors. 
First, for any $k=1,\dots,N_T$ we subtract the weak formulation \eqref{wf2_ref} from the numerical scheme \eqref{equstability} and get 
\begin{equation}\label{TIK}
  \sum_{i=1}^7 T_i^k =0,  
\end{equation} 
where $T_i^k$ reads (keeping in mind that $R_i^k$, $i=1,\dots, 7$, are given in \eqref{RS})
\begin{align*}
T_1^k  = & \vrf\intOref{ (\eta_h^{k} \PDt \hvu_h^k   -  \eta^k \pdt \hvu^k  ) \cdot \hbfphi  } 
\\ = &
\vrf \intOref{ \big( \eta_h^k \PDt (\hvu_h^k - \hvu^k) + \eta_h^k (\PDt \hvu^k -\pdt\hvu^k ) + (\eta_h^k -\eta^k) \pdt \hvu^k \big) \cdot \hbfphi } 
\\= &
 \vrf \intOref{  \eta_h^k \PDt \eu ^k  \cdot \hbfphi }  + R^k_1,
\\
T_2^k  =&  \frac12 \vrf \intOref{ ( \PDt \eta_h^k \hvu_h^{k*}   - \pdt \eta^k  \hvu^k ) \cdot \hbfphi  } 
	\\=&
\frac12\vrf \intOref{ \Big( \PDt \eta_h^k (\hvu_h^{k*} - \hvu^{k*}) + ( \PDt \eta_h^k -   \pdt \eta^k )\hvu^{k*}   + \pdt \eta^k (\hvu^{k*} -\hvu^k ) \Big) \cdot \hbfphi}
\\ =&
\frac12\vrf \intOref{ \PDt \eta_h^k \eu ^{k*}  \cdot \hbfphi } 
+R_2^k,
\\
 T_3^k = &
\frac12 \vrf \intOref{  \big( \hbfphi \cdot (\Grad\hvu_h^k)      -  \hvu_h^k \cdot (\Grad\hbfphi)   \big)  \cdot  (\Jacob_h^k)^{-1}  \cdot \hvv_h^{k-1} \eta_h^k } 
\\&  - \frac12 \vrf \intOref{  \big( \hbfphi \cdot (\Grad\hvu^k)      -  \hvu^k \cdot (\Grad\hbfphi)   \big)  \cdot  (\Jacob^k)^{-1}  \cdot \hvv^{k} \eta^k } 
\\= &
\frac12 \vrf \intOref{ \Big( \hbfphi \cdot   (\Grad \euk ) 
-\euk   \cdot   (\Grad \hbfphi) \Big)
 \cdot (\Jacob_h^k)^{-1} \hvv_h^{k-1} \eta_h^k }
\\&  
+ \frac12 \vrf \intOref{ \Big( \hbfphi \cdot (\Grad \hvu^k) - \hvu^k \cdot (\Grad\hbfphi) \Big) \cdot \left(  (\Jacob_h^k)^{-1} \hvv_h^{k-1} \eta_h^k  -  (\Jacob^k)^{-1} \hvv^{k} \eta^k  \right)  } 
 =  R^k_3,
\\
 T_4^k = & 
 \intOrefB{ \hp_h^k \Grad \hbfphi : \Mhk - \hp^k \Grad \hbfphi : \Mk}
\\ = &  
\intOref{  e_p^k \Grad \hbfphi : \Mhk } 
+ \intOref{ \hp^k \Grad \hbfphi : \big(\Mhk -  \Mk\big)} 
=R_4^k,
\\
T_5^k = &
2 \mu \intOrefB{  \big( \Grad \hvu_h^k (\Jacob_h^k)^{-1}\big)^\rmS  :  \big(\Grad \hbfphi  (\Jacob_h^k)^{-1}\big)\eta_h^k   
- \big( \Grad \hvu^k (\Jacob^k)^{-1}\big)^\rmS  :  \big(\Grad \hbfphi (\Jacob^k)^{-1}\big) \eta^k    }
\\ = & 2 \mu \intOref{ \big( \Grad \euk  (\Jacob_h^k)^{-1} \big)^\rmS: \big(\Grad \hbfphi (\Jacob_h^k)^{-1} \big)\eta_h^k  } 
\\ & + 2 \mu \intOrefB{  \big( \Grad \hvu^k (\Jacob_h^k)^{-1} \big)^\rmS: (\Grad \hbfphi (\Jacob_h^k)^{-1}) \eta_h^k 
-\big( \Grad \hvu^k (\Jacob^k)^{-1} \big)^\rmS: (\Grad \hbfphi (\Jacob^k)^{-1})\eta^k}
\\ = & 2 \mu \intOref{ \big( \Grad \euk  (\Jacob_h^k)^{-1} \big)^\rmS: \big(\Grad \hbfphi (\Jacob_h^k)^{-1} \big)\eta_h^k  }  + R_5^k,
\\
  T_6^k =&  \vrs  \intS{( \PDt \xi_h^k  - \pdt \xi^k  )\psi} 
 =  \vrs\intS{\PDt e_\xi^k \psi} +R_6^k,
\\
  T_7^k =& a_s(\eta_h^{k+1},  \zeta_h^{k}, \psi)  -
  a_s(\eta^k,  \zeta^{k} ,\psi)  
 \\ = &
a_s(e_\eta^{k+1},e_\zeta^{k+1},\psi)
+ \gamma_1 \intS{\pdx (\eta^{k+1} -\eta^k) \pdx \psi}
+ \gamma_2 \intS{\pdx (\zeta^{k+1} -\zeta^k) \pdx \psi} 
 \\= &
a_s(e_\eta^{k+1},e_\zeta^{k+1},\psi)
+R_7^k.
 \end{align*}
Consequently, substituting the above expansions of the $T_i$-terms into \eqref{TIK} and shifting the $R_i$-terms to the right-hand-side, we derive \eqref{EEQ}.

\subsection{Preliminary estimates}
In this part we show some preliminary estimates and equalities. 
First, we show the estimates related to the time discretization operator $\PDt$ given in scheme \eqref{SKM_ref}. The technical details of proofs can be found in \cite[Appendix B.2]{SST}.
\begin{Lemma}
\label{lm_edt}
Let $\phi \in L^2((0,T)\times D)$ for $D \in \{ \Sigma, \Oref\}$. Then we have
\begin{subequations}
\begin{equation}\label{edts1}
 \TS \sumN \norm{ \PDt \phi ^{k} -  \pdt \phi^k}_{L^2(D)}^2 \aleq \TS^2  \norm{\pdtt \phi}_{L^2((0,T)\times D)}^2,
\end{equation}
\begin{equation}\label{edts2}
 \TS \sumN \norm{ \PDt \phi ^{k+1} -  \pdt \phi^k}_{L^2(D)}^2 \aleq \TS^2  \norm{\pdtt \phi}_{L^2((0,T)\times D)}^2.
\end{equation}
\end{subequations}
\end{Lemma}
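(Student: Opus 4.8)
The statement is a pair of standard Taylor-type consistency estimates for the backward difference quotient $\PDt\phi^k=(\phi^k-\phi^{k-1})/\TS$, valued in $L^2(D)$, so the whole proof reduces to a one-dimensional-in-time argument applied pointwise in $x\in D$ and then integrated. The plan is to write the difference between the discrete derivative and the exact derivative at the appropriate node as an integral of $\pdtt\phi$ over an interval of length $\order(\TS)$, square, apply Cauchy--Schwarz to pull out a factor $\TS$, integrate over $D$, and finally sum over $k$ telescoping the time intervals so that the sum of the local integrals is controlled by the single global norm $\norm{\pdtt\phi}_{L^2((0,T)\times D)}^2$.

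More precisely, for \eqref{edts1} I would start from the identity (valid for a.e.\ $x$, for $\phi$ with the stated $W^{2,2}$-in-time regularity, using $t^k=k\TS$ and $t^{k-1}=(k-1)\TS$)
\begin{equation*}
\PDt\phi^k(x)-\pdt\phi^k(x)
=\frac{1}{\TS}\int_{t^{k-1}}^{t^k}\big(\pdt\phi(t,x)-\pdt\phi(t^k,x)\big)\dt
=-\frac{1}{\TS}\int_{t^{k-1}}^{t^k}\int_{t}^{t^k}\pdtt\phi(s,x)\,{\rm d}s\dt .
\end{equation*}
Taking absolute values, using $|s-t|\le\TS$ for $s,t\in[t^{k-1},t^k]$, and Cauchy--Schwarz twice gives
\begin{equation*}
\abs{\PDt\phi^k(x)-\pdt\phi^k(x)}^2
\aleq \TS\int_{t^{k-1}}^{t^k}\abs{\pdtt\phi(s,x)}^2\,{\rm d}s .
\end{equation*}
Integrating over $x\in D$, multiplying by $\TS$, and summing over $k=1,\dots,N$, the intervals $[t^{k-1},t^k]$ are disjoint and cover $(0,T)$, so the sum collapses to $\TS^2\norm{\pdtt\phi}_{L^2((0,T)\times D)}^2$, which is exactly \eqref{edts1}. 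For \eqref{edts2} the only change is that the node at which $\pdt\phi$ is evaluated is $t^k$ while the difference quotient uses the interval $[t^{k-1},t^k]$ (so $\PDt\phi^{k+1}$ compared with $\pdt\phi^k$); one writes
\begin{equation*}
\PDt\phi^{k+1}(x)-\pdt\phi^k(x)
=\frac{1}{\TS}\int_{t^k}^{t^{k+1}}\big(\pdt\phi(t,x)-\pdt\phi(t^k,x)\big)\dt
=\frac{1}{\TS}\int_{t^k}^{t^{k+1}}\int_{t^k}^{t}\pdtt\phi(s,x)\,{\rm d}s\dt ,
\end{equation*}
and the identical chain of inequalities applies with $[t^{k-1},t^k]$ replaced by $[t^k,t^{k+1}]$; summing over $k$ and noting the intervals $[t^k,t^{k+1}]$ are again disjoint and contained in $(0,T)$ yields \eqref{edts2}.

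There is no genuine obstacle here; the estimates are elementary and the argument is the textbook consistency bound for the implicit Euler difference quotient. The only point requiring a word of care is the justification of the fundamental-theorem-of-calculus representation, i.e.\ that $\phi\in W^{2,2}(0,T;L^2(D))$ makes $t\mapsto\phi(t,x)$ (and $t\mapsto\pdt\phi(t,x)$) absolutely continuous for a.e.\ $x$ so that the iterated-integral identities hold; this is standard (it follows from the Sobolev embedding $W^{2,2}(0,T)\hookrightarrow C^1([0,T])$ together with Fubini to handle the $x$-dependence), and I would simply cite it rather than belabour it. Since the reference \cite[Appendix B.2]{SST} already contains these computations, the write-up can be kept to the two displayed identities above plus the one-line Cauchy--Schwarz-and-sum argument.
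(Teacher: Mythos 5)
Your proof is correct and is the standard argument; the paper itself gives no proof of this lemma, deferring to \cite[Appendix B.2]{SST}, and your Taylor-expansion-with-integral-remainder plus Cauchy--Schwarz computation is exactly the consistency bound that reference contains, with the correct factor bookkeeping ($|\PDt\phi^k-\pdt\phi^k|^2\aleq\TS\int_{t^{k-1}}^{t^k}|\pdtt\phi|^2\,{\rm d}s$, then multiply by $\TS$ and telescope). One trivial prose slip: for \eqref{edts2} the difference quotient $\PDt\phi^{k+1}$ lives on $[t^k,t^{k+1}]$, not $[t^{k-1},t^k]$ as your sentence states, but your displayed identity already uses the correct interval, so nothing is affected.
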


\begin{Lemma}
Let $\eta \in W^{2,2}(\Sigma)$, $\xi=\pdt \eta$, $\zeta = - \Lapx \eta$, $\eta_h \in \Vsh$, $\xi_h^k=\PDt \eta_h^{k+1}$, $k=1,\ldots,N_T$,  $\zeta_h = -\Laph \eta_h$, and $\psi\in \Vsh$. Let $a_s$ be given by \eqref{as2} and the notation of the errors be given by \eqref{ers}. Then
\begin{subequations}
\begin{equation}\label{IM1}
 \delta_\xi^k = \PDt \delta_\eta^{k+1} + \Riesz (\PDt \eta^{k+1} -\pdt \eta^k),
\end{equation}
\begin{equation}\label{IM12}
     \delta_\zeta 
     = - \Laph \delta_\eta ,
\quad
 \intS{  \psi  \delta_\zeta  } 
= - \intS{ \Laph \psi  \;  \delta_\eta   } .
\end{equation}

\begin{equation}\label{IM2}
 \intS{\pdx I_\eta \pdx \psi}=0,\quad   \intS{\pdx I_\xi \pdx \psi}=0, 
\end{equation}
\begin{equation}\label{IM23}
\intS{I_\zeta \psi} =0, \quad 
\intS{\pdx I_\zeta \pdx\psi}=0,
\end{equation}
\begin{equation}\label{IM3}
   \intS{\pdx \delta_\zeta \pdx \psi}
  = \intS{\Laph \delta_\eta \Laph \psi},
\end{equation}
\begin{equation}\label{IM4}
\begin{aligned}
	\intS{\pdx \delta_\eta^{k+1} \pdx \delta_\xi^k}
	=& \intSB{\PDt \frac{|\pdx \delta_\eta^{k+1}|^2}{2}  + \frac{\TS}{2}|\PDt \pdx  	\delta_\eta^{k+1}|^2}
\\&+\intS{\pdx \delta_\eta^{k+1} \pdx (\PDt \eta^{k+1} - \pdt\eta^k )},
\end{aligned}
\end{equation}
\begin{equation}\label{IM5}
\begin{aligned}
	\intS{\pdx \delta_\zeta^{k+1} \pdx \delta_\xi^k}
	=& \intSB{\PDt \frac{| \delta_\zeta^{k+1}|^2}{2}  + \frac{\TS}{2}|\PDt   	\delta_\zeta^{k+1}|^2}
\\&- \intS{ \delta_\zeta^{k+1} \Lapx (\PDt \eta^{k+1} - \pdt\eta^k )},
\end{aligned}
\end{equation}
\begin{equation}\label{IM6}
\begin{aligned}
&a_s(e_\eta^{k+1},e_\zeta^{k+1},\delta_\xi^k )
=
 \PDt \intSB{ \frac{\gamma_1}{2} \abs{\pdx \delta_\eta^{k+1}}^2 + \frac{\gamma_2}{2} \abs{\delta_\zeta^{k+1}}^2 }
\\&\quad + \frac{\TS}2 \intSB{\gamma_1 |\PDt \pdx \delta_\eta^{k+1}|^2 +\gamma_2 |\PDt \delta_\zeta^{k+1}|^2  }
\\& \quad 
+\intSB{\gamma_1 \pdx \delta_\eta^{k+1} \pdx (\PDt \eta^{k+1} - \pdt\eta^k )
-\gamma_2 \delta_\zeta^{k+1} \Lapx (\PDt \eta^{k+1} - \pdt\eta^k )}.
\end{aligned}
\end{equation}
\end{subequations}
\end{Lemma}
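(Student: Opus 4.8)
\textbf{Proof plan for the identities \eqref{IM1}--\eqref{IM6}.}
The plan is to prove each identity by a direct computation that combines (i) the defining properties of the Riesz projection $\Riesz$ and the discrete Laplacian $\Laph$ recalled in Section~\ref{Sec_wf}, (ii) the linearity of $\PDt$ together with the algebraic splitting $\PDt \eta_h^{k+1} = \xi_h^k$, and (iii) the elementary identity $(a-b)a = \tfrac{a^2-b^2}{2} + \tfrac{(a-b)^2}{2}$, equivalently $\phi^{k+1}\,\PDt\phi^{k+1} = \PDt\tfrac{|\phi^{k+1}|^2}{2} + \tfrac{\TS}{2}|\PDt\phi^{k+1}|^2$ applied pointwise on $\Sigma$. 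I would carry the identities out roughly in the order they are stated, since the later ones reuse the earlier ones.

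First, \eqref{IM1}: write $\delta_\xi^k = \xi_h^k - \Riesz\xi^k$, insert $\xi_h^k = \PDt\eta_h^{k+1}$ and $\Riesz\xi^k = \Riesz\xi^k - \PDt\Riesz\eta^{k+1} + \PDt\Riesz\eta^{k+1}$, and use $\PDt\Riesz\eta^{k+1} = \Riesz\PDt\eta^{k+1}$ (linearity of $\Riesz$) to regroup as $\PDt(\eta_h^{k+1}-\Riesz\eta^{k+1}) + \Riesz(\PDt\eta^{k+1}-\pdt\eta^k) + \Riesz(\pdt\eta^k - \xi^k)$; the last term vanishes because $\xi = \pdt\eta$, giving \eqref{IM1}. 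For \eqref{IM12}, the first equation is immediate from $\zeta_h = -\Laph\eta_h$ and $\delta_\zeta^k = \zeta_h^k + \Laph\Riesz\eta^k$, while the integral identity is just the symmetry $\intS{\psi\,\Laph\phi} = \intS{\Laph\psi\,\phi}$ for $\psi,\phi\in\Vsh$, which follows from the definition \eqref{laph} of $\Laph$ (both sides equal $-\intS{\pdx\psi\,\pdx\phi}$). The orthogonality relations \eqref{IM2} and \eqref{IM23} are restatements of the defining properties of $\Riesz$: $\intS{\pdx(\Riesz\eta-\eta)\pdx\psi}=0$ gives the first half of \eqref{IM2}, applying it with $\eta$ replaced by $\xi$ gives the second half; for \eqref{IM23}, $I_\zeta = -\Laph\Riesz\eta - \zeta = -(\Laph\Riesz\eta - \Lapx\eta)$ and the two displayed orthogonality properties $\intS{(\Laph\Riesz\eta-\Lapx\eta)\psi}=0$ and $\intS{\pdx(\Laph\Riesz\eta-\Lapx\eta)\pdx\psi}=0$ deliver both equations.

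Identity \eqref{IM3} follows by testing the definition \eqref{laph} of $\Laph$ with $\psi$ applied to the function $\delta_\eta$: by \eqref{IM12}, $\intS{\pdx\delta_\zeta\pdx\psi} = -\intS{\pdx\Laph\delta_\eta\pdx\psi} = \intS{\Laph\delta_\eta\Laph\psi}$, where the last step uses \eqref{laph} with test function $\Laph\psi\in\Vsh$. Then \eqref{IM4} and \eqref{IM5}: in \eqref{IM4} substitute $\delta_\xi^k = \PDt\delta_\eta^{k+1} + \Riesz(\PDt\eta^{k+1}-\pdt\eta^k)$ from \eqref{IM1}, split the integrand accordingly, apply the pointwise algebraic identity to the term $\intS{\pdx\delta_\eta^{k+1}\,\pdx\PDt\delta_\eta^{k+1}}$, and finally use $\intS{\pdx\delta_\eta^{k+1}\pdx\Riesz(\PDt\eta^{k+1}-\pdt\eta^k)} = \intS{\pdx\delta_\eta^{k+1}\pdx(\PDt\eta^{k+1}-\pdt\eta^k)}$ by the orthogonality in \eqref{IM2} (the $I$-part has zero $\pdx$-pairing with $\Vsh$-functions, and $\delta_\eta^{k+1}\in\Vsh$). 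Identity \eqref{IM5} is analogous but pairs $\pdx\delta_\zeta^{k+1}$ with $\pdx\delta_\xi^k$; here one uses \eqref{IM3} to convert $\intS{\pdx\delta_\zeta^{k+1}\pdx\PDt\delta_\eta^{k+1}}$ into $\intS{\delta_\zeta^{k+1}\,\PDt\delta_\zeta^{k+1}}$ (via $\delta_\zeta = -\Laph\delta_\eta$ and symmetry), apply the pointwise identity, and handle the remainder $\Riesz(\PDt\eta^{k+1}-\pdt\eta^k)$ term using \eqref{IM12} and the fact that $\intS{\pdx\delta_\zeta^{k+1}\pdx\Riesz w} = -\intS{\delta_\zeta^{k+1}\Lapx w}$ after integrating by parts on the smooth factor $w = \PDt\eta^{k+1}-\pdt\eta^k$. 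Finally, \eqref{IM6} is assembled from \eqref{IM4} and \eqref{IM5}: starting from $a_s(e_\eta^{k+1},e_\zeta^{k+1},\delta_\xi^k) = \intS{(\gamma_1\pdx e_\eta^{k+1} + \gamma_2\pdx e_\zeta^{k+1})\pdx\delta_\xi^k}$, one replaces $e = \delta + I$, kills the $I$-contributions by \eqref{IM2}--\eqref{IM23} (so only $\delta_\eta^{k+1}$ and $\delta_\zeta^{k+1}$ survive in the $\pdx$-pairings), and then substitutes \eqref{IM4} (multiplied by $\gamma_1$) and \eqref{IM5} (multiplied by $\gamma_2$).

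The computations are all elementary; the main subtlety — and the only place where care is genuinely required — is the bookkeeping of which interpolation-error orthogonality property kills which cross term, particularly distinguishing the three different orthogonalities of $\Riesz$ (the $\pdx$-pairing, the $\Lapx$-vs-$\Laph$ pairing, and the $\pdx$-of-$\Laph$ pairing) and making sure that in \eqref{IM4}--\eqref{IM6} the test objects one pairs against ($\delta_\eta^{k+1}$, $\delta_\zeta^{k+1}$, $\Laph\psi$, etc.) indeed lie in $\Vsh$ so that the discrete relations apply. Once the membership is checked and the algebraic identity $(a-b)a = \tfrac{a^2-b^2+(a-b)^2}{2}$ is applied consistently, each identity drops out; I would therefore organize the writeup as a short lemma-by-lemma sequence, each proof being two or three lines, and refer back to \eqref{laph} and the $\Riesz$-properties of Section~\ref{Sec_wf} rather than re-deriving them.
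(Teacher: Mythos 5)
Your overall strategy is the right one; in fact the paper omits the proof of this lemma entirely, and identities \eqref{IM1}, \eqref{IM12}, \eqref{IM2}, the first half of \eqref{IM23}, \eqref{IM3}, \eqref{IM4}, \eqref{IM5} follow exactly as you describe from the linearity of $\PDt$ and $\Riesz$, the symmetry of the bilinear form $\intS{\pdx\cdot\,\pdx\cdot}$ defining $\Laph$, the algebraic identity $(a-b)a=\tfrac{a^2-b^2}{2}+\tfrac{(a-b)^2}{2}$, and the stated orthogonalities of $\Riesz$. (You read \eqref{laph} with the convention $\intS{\Laph\phi\,\psi}=-\intS{\pdx\phi\,\pdx\psi}$; as literally printed \eqref{laph} gives the opposite sign, but your convention is the one consistent with \eqref{as2} and \eqref{IM3}, so this is a typo in the paper rather than an error of yours.)

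The genuine gap is the second identity of \eqref{IM23}, $\intS{\pdx I_\zeta\,\pdx\psi}=0$, which you justify by appealing to a ``displayed orthogonality property'' $\intS{\pdx(\Laph\Riesz\eta-\Lapx\eta)\,\pdx\psi}=0$. No such property is displayed in Section~\ref{Sec_wf}: the only stated facts about $\Riesz$ are the $H^1$-orthogonality of $\Riesz\eta-\eta$, the $L^2$-orthogonality of $\Laph\Riesz\eta-\Lapx\eta$, and the bounds \eqref{RieszE}. Since $I_\zeta=-(\Laph\Riesz\eta-\Lapx\eta)$, the property you invoke is verbatim the claim to be proved, so this step is circular. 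Nor does it follow from the two stated orthogonalities: the second of them says precisely that $\Laph\Riesz\eta$ is the $L^2$-projection of $\Lapx\eta$ onto $\Vsh$ (indeed $\intS{\Laph\Riesz\eta\,\psi}=-\intS{\pdx\Riesz\eta\,\pdx\psi}=-\intS{\pdx\eta\,\pdx\psi}=\intS{\Lapx\eta\,\psi}$), and an $L^2$-projection onto continuous $\calP^1$ is not $H^1$-orthogonal in general. The identity matters, because it is exactly what kills the cross term $\gamma_2\intS{\pdx I_\zeta^{k+1}\,\pdx\delta_\xi^k}$ in your assembly of \eqref{IM6}; without it, \eqref{IM6} acquires an extra remainder. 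To close the gap you need either an additional property of $\Riesz$ imported from \cite{SST} (e.g.\ that $\Riesz$ is built so that $-\Laph\Riesz\eta$ coincides with the $H^1$-Ritz projection of $\zeta$), or an explicit estimate of the residual $\intS{\pdx I_\zeta^{k+1}\,\pdx\delta_\xi^k}$ of order $h$ to be absorbed into the error bound. A minor related point: $\intS{\pdx I_\zeta\,\pdx\psi}$ only makes sense when $\Lapx\eta\in W^{1,2}(\Sigma)$, i.e.\ $\eta\in W^{3,2}(\Sigma)$, which exceeds the hypothesis $\eta\in W^{2,2}(\Sigma)$ of the lemma; that, however, is an issue with the statement rather than with your argument.
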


\subsection{Secondary estimates} 

\begin{Lemma}\label{b5}
Let $G_{sL}$ be given by \eqref{REFB5}, then the following estimates of $G_{sL}$ hold:
\begin{equation}\label{estimate}
\begin{aligned}
G_{sL}\gtrsim -\TS^2 +\frac{\tau^2\gamma_1^2}{2\vrs}\norm{\delta_\zeta^{m+1}}^2_{L^2(\Sigma)}+\frac{\tau^2\gamma_2^2}{2\vrs}\norm{\Laph\delta_\zeta^{m+1}}^2_{L^2(\Sigma)}-\TS \summ \delta_E^k\\
-\summ\frac{\tau^3\gamma_1^2}{2\vrs}\norm{\delta_\zeta^{k+1}}^2_{L^2(\Sigma)}-\summ\frac{\tau^3\gamma_2^2}{2\vrs}\norm{\Laph\delta_\zeta^{k+1}}^2_{L^2(\Sigma)}. 
\end{aligned}
\end{equation}
\end{Lemma}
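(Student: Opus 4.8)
The plan is to treat $G_{sL}$ in the spirit of the terms $T_1$ and $T_2$ of the proof of Theorem~\ref{Thm_Sta}: in each of the two sums defining $G_{sL}$ in \eqref{REFB5} I separate the ``diagonal'' part, built purely from the projection errors, which telescopes exactly, from the ``mismatch'' part, which involves the Riesz projections of the smooth solution and is absorbed by Young's inequality. Concretely, inserting $\xi_h^k=\delta_\xi^k+\Riesz\xi^k$ into $\Delta_\xi^k=\Laph(\gamma_2\Laph\xi_h^k-\gamma_1\xi_h^k)$ gives $\Delta_\xi^k=\Delta_\delta^k+\Delta_I^k$ with $\Delta_\delta^k:=\Laph(\gamma_2\Laph\delta_\xi^k-\gamma_1\delta_\xi^k)$ and $\Delta_I^k:=\Laph(\gamma_2\Laph\Riesz\xi^k-\gamma_1\Riesz\xi^k)$; for the second sum I use instead $\Delta_\xi^k=\PDt z_h^k$ from \eqref{deltaxi} with $z_h^k:=\gamma_1\zeta_h^{k+1}-\gamma_2\Laph\zeta_h^{k+1}$, split $z_h^k=z_\delta^k+z_I^k$ along $\zeta_h^{k+1}=\delta_\zeta^{k+1}-\Laph\Riesz\eta^{k+1}$, and note by \eqref{as2} that $a_s(\delta_\eta^{k+1},\delta_\zeta^{k+1},\Delta_\xi^k)=\intS{z_\delta^k\,\Delta_\xi^k}=\intS{z_\delta^k\PDt z_\delta^k}+\intS{z_\delta^k\PDt z_I^k}$.

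For the diagonal parts I would repeat verbatim the algebra of Theorem~\ref{Thm_Sta}. Using the self-adjointness of the discrete Laplacian from \eqref{laph} and the identity $(a-b)a=\tfrac12(a^2-b^2)+\tfrac12(a-b)^2$, one gets $\TS^3\summ\intS{\PDt\delta_\xi^k\,\Delta_\delta^k}=\tfrac{\TS^2}{2}\big(\gamma_2\norm{\Laph\delta_\xi^m}_{L^2}^2+\gamma_1\norm{\pdx\delta_\xi^m}_{L^2}^2\big)-(\text{same at }k{=}0)+(\text{nonnegative})$, and since $\delta_E^0=0$ forces $\delta_\xi^0=0$, this is $\geq 0$ and may be discarded. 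Likewise $\tfrac{\TS^3}{\vrs}\summ\intS{z_\delta^k\PDt z_\delta^k}=\tfrac{\TS^2}{2\vrs}\big(\norm{z_\delta^m}_{L^2}^2-\norm{z_\delta^0}_{L^2}^2\big)+(\text{nonnegative})$; here $\delta_\zeta^1=0$ (again from $\delta_E^0=0$) gives $z_\delta^0=0$, and the semidefiniteness of $\Laph$ yields $\norm{z_\delta^m}_{L^2}^2\geq\gamma_1^2\norm{\delta_\zeta^{m+1}}_{L^2}^2+\gamma_2^2\norm{\Laph\delta_\zeta^{m+1}}_{L^2}^2$, which is exactly the positive contribution claimed in \eqref{estimate}.

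The remaining mismatch terms $\TS^3\summ\intS{\PDt\delta_\xi^k\,\Delta_I^k}$ and $\tfrac{\TS^3}{\vrs}\summ\intS{z_\delta^k\PDt z_I^k}$ I would estimate by Young's inequality. The ``numerical'' halves $\tfrac{a}{2}\TS^3\summ\norm{\PDt\delta_\xi^k}^2$ and $\tfrac{a}{2\vrs}\TS^3\summ\norm{z_\delta^k}^2$ are absorbed: the first into $\TS\summ D_{num}^k$ once $\TS$ is small, and the second — after $\norm{z_\delta^k}^2\aleq\gamma_1^2\norm{\delta_\zeta^{k+1}}^2+\gamma_2^2\norm{\Laph\delta_\zeta^{k+1}}^2$ and $\norm{\delta_\zeta^{k+1}}^2\aleq\delta_E^k$ — into $\TS\summ\delta_E^k$ together with the $\summ\tfrac{\TS^3}{\vrs}\big(\gamma_1^2\norm{\delta_\zeta^{k+1}}^2+\gamma_2^2\norm{\Laph\delta_\zeta^{k+1}}^2\big)$ that appears on the right of \eqref{estimate}. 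The ``smooth'' halves $\tfrac1{2a}\TS^3\summ\norm{\Delta_I^k}_{L^2}^2$ and $\tfrac1{2a\vrs}\TS^3\summ\norm{\PDt z_I^k}_{L^2}^2$ must be shown to be of order $\TS^2$, i.e.\ $\TS\summ\norm{\Delta_I^k}_{L^2}^2\aleq 1$ and $\TS\summ\norm{\PDt z_I^k}_{L^2}^2\aleq 1$. Both reduce to the a priori bound $\norm{\Laph\Laph\Riesz g}_{L^2(\Sigma)}\aleq\norm{g}_{W^{4,2}(\Sigma)}$ for the discrete bi-Laplacian — which follows from the super-convergence property $\intS{(\Laph\Riesz g-\Lapx g)\psi}=0$ recalled in Section~\ref{Sec_wf} (so $\Laph\Riesz g$ is the $L^2$-projection of $\Lapx g$), the $H^1$-stability of that projection on the quasi-uniform mesh, and an elementwise integration by parts using $\psi|_{\partial\Sigma}=0$ — combined with the regularity $\xi=\pdt\eta\in L^2(0,T;W^{4,2}(\Sigma))$ in \eqref{STClass}; for $\PDt z_I^k$ one uses in addition $\norm{\PDt\eta^{k+1}}_{W^{4,2}}^2\aleq\TS^{-1}\int_{t^k}^{t^{k+1}}\norm{\pdt\eta}_{W^{4,2}}^2$.

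The crux is precisely this control of the mismatch terms. The numerical dissipation $D_{num2}^k$ of Theorem~\ref{Thm_Sta} only gives the weak bound $\TS^4\summ\norm{\Delta_\xi^k}_{L^2}^2\aleq 1$, so a direct Cauchy--Schwarz on the first sum of $G_{sL}$ would leave an uncontrollable $\order(\TS^{-1})$ remainder; the splitting is indispensable, because the weakly-controlled discrete part $\Delta_\delta^k$ must be handled by exact telescoping (with no loss), while only the smooth part $\Delta_I^k$ — which enjoys the far stronger $\TS\summ\norm{\Delta_I^k}_{L^2}^2\aleq 1$ — may be squandered in Young's inequality. The genuinely delicate point is establishing that stronger bound: replacing $\Laph$ by a crude inverse estimate would cost a factor $h^{-2}$ and impose a CFL restriction $\TS\aleq h^2$, which Theorem~\ref{theorem_conv_rate} must avoid, so the argument relies essentially on the super-convergence of the pair $(\Riesz,\Laph)$ and on the (barely available) spatial $W^{4,2}$-regularity of $\pdt\eta$. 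Finally one has to keep careful track of the powers of $\TS$ in each Young split so that every numerical remainder carries an extra factor of $\TS$ relative to $\TS\summ(D_{num}^k+\delta_E^k)$ and every smooth remainder the factor $\TS^2$; this bookkeeping is what makes \eqref{estimate} hold unconditionally.
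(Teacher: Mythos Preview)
Your overall strategy---split each sum in $G_{sL}$ into a ``diagonal'' part that telescopes exactly and a ``mismatch'' part that is bounded by Young's inequality, with the smooth halves controlled by the $W^{4,2}$-regularity of $\pdt\eta$ in \eqref{STClass}---is precisely the one the paper follows. The execution differs only in the choice of splitting for the first sum $\TS^3\summ\intS{\PDt\delta_\xi^k\,\Delta_\xi^k}$: you decompose $\Delta_\xi^k$ via the $\xi$-representation $\xi_h^k=\delta_\xi^k+\Riesz\xi^k$, whereas the paper uses the $\zeta$-representation throughout (substituting $\zeta_h^{k+1}=\delta_\zeta^{k+1}-\Laph\Riesz\eta^{k+1}$ into $\TS\Delta_\xi^k$ and then invoking \eqref{IM1} to relate $\delta_\zeta^{k+1}-\delta_\zeta^k$ back to $\delta_\xi^k$). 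Both routes produce the same telescoped quantities and the same mixed remainders; the paper's version has the advantage that the quadratic telescoping remainders $\|\pdx(\delta_\xi^k-\delta_\xi^{k-1})\|^2$ and $\|\Laph(\delta_\xi^k-\delta_\xi^{k-1})\|^2$ appear explicitly and are used to absorb the numerical halves of the Young splits internally, so no reference to $\delta_N^k$ is needed.

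One small imprecision worth fixing: the lemma as stated does not have $\TS\summ\delta_N^k$ on the right, so you cannot absorb $\frac{a}{2}\TS^3\summ\|\PDt\delta_\xi^k\|^2$ there. This is easily repaired---write $\TS^3\summ\|\PDt\delta_\xi^k\|^2=\TS\summ\|\delta_\xi^k-\delta_\xi^{k-1}\|^2\aleq\TS\summ\|\delta_\xi^k\|^2\aleq\TS\summ\delta_E^k$, which is what the paper does for its corresponding cross term $Q_{2k}$. With that change your proposal proves exactly \eqref{estimate}. Finally, your justification of the bound $\|\Laph\Laph\Riesz g\|_{L^2(\Sigma)}\aleq\|g\|_{W^{4,2}(\Sigma)}$ via ``$H^1$-stability of the $L^2$-projection plus elementwise integration by parts'' is somewhat loose; a cleaner route is to write $\Laph\Laph\Riesz g=\Laph P_h(\Lapx g)$ (super-convergence), then $\Laph P_h v=\Laph\Riesz v+\Laph(P_h v-\Riesz v)$, bound the first piece by $\|\Lapx v\|$ via \eqref{RieszE}, and the second by the inverse estimate $\|\Laph w_h\|\aleq h^{-2}\|w_h\|$ together with $\|P_h v-\Riesz v\|\aleq h^2\|v\|_{W^{2,2}}$. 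The paper handles this point with comparable brevity.
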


\begin{proof}Recalling  $ \Delta_\xi^k
=D_t (\gamma_1  \zeta_h^{k+1}  - \gamma_2 \Laph \zeta_h^{k+1} )$ from  \eqref{deltaxi} 
we have
\begin{equation*}
\TS \Delta_\xi^k= \gamma_1 (\delta_\zeta^{k+1}-\delta_\zeta^{k})+\gamma_1{\Riesz}(\zeta^{k+1}-\zeta^{k})-\gamma_2 \Laph  (\delta_\zeta^{k+1}-\delta_\zeta^{k})-\gamma_2 \Laph{\Riesz}(\zeta^{k+1}-\zeta^{k}).
\end{equation*}
Then, we reformulate $G_{sL}$ as 
 \begin{equation*}
G_{sL}=\TS\summ(Q_{1k}+Q_{2k}+Q_{3k})
\end{equation*}
with 
\begin{align*}
Q_{1k}=&
\tau\intS{\PDt \delta_\xi^k(\gamma_1(\delta_\zeta^{k+1}-\delta_\zeta^{k})-\gamma_2\Laph(\delta_\zeta^{k+1}-\delta_\zeta^{k}))}
\\
&+\frac{\tau}{\vrs}\intS{(\gamma_2\Laph\delta_\zeta^{k+1}-\gamma_1\delta_\zeta^{k+1})(\gamma_2\Laph(\delta_\zeta^{k+1}-\delta_\zeta^{k})-\gamma_1(\delta_\zeta^{k+1}-\delta_\zeta^{k}))},
\\ 
Q_{2k}=&-\intS{(\delta_\xi^{k}-\delta_\xi^{k-1})(\gamma_2\Laph {\Riesz}(\zeta^{k+1}-\zeta^{k})-\gamma_1 {\Riesz}(\zeta^{k+1}-\zeta^{k}))},
\\
Q_{3k}=&\frac{\tau}{\vrs}\intS{(\gamma_1\partial_{x_1}\delta_\eta^{k+1}+\gamma_2\partial_{x_1}\delta_\zeta^{k+1})(\gamma_2\Laph {\Riesz}(\zeta^{k+1}-\zeta^{k})-\gamma_1   \Riesz (\zeta^{k+1}-\zeta^{k}))},
\end{align*}

Using \eqref{IM1}, we obtain
\begin{equation*}
\begin{aligned}
Q_{1k}&
=\frac{\tau\gamma_1^2}{2\vrs}\left(\norm{\delta_\zeta^{k+1}}^2_{L^2(\Sigma)}-\norm{\delta_\zeta^{k}}^2_{L^2(\Sigma)}+\norm{\delta_\zeta^{k+1}-\delta_\zeta^{k}}^2_{L^2(\Sigma)}\right)\\
&+\frac{\tau\gamma_1\gamma_2}{\vrs}\left(\norm{\partial_{x_1}\delta_\zeta^{k+1}}^2_{L^2(\Sigma)}-\norm{\partial_{x_1}\delta_\zeta^{k}}^2_{L^2(\Sigma)}
+\norm{\partial_{x_1}(\delta_\zeta^{k+1}-\delta_\zeta^{k})}^2_{L^2(\Sigma)}\right)
\\&
+\frac{\tau\gamma_2^2}{2\vrs}\left(\norm{\Laph\delta_\zeta^{k+1}}^2_{L^2(\Sigma)}-\norm{\Laph\delta_\zeta^{k}}^2_{L^2(\Sigma)}+\norm{\Laph(\delta_\zeta^{k+1}-\delta_\zeta^{k})}^2_{L^2(\Sigma)}\right)\\&
+\frac{\gamma_1\tau}{2}\left(\norm{\partial_{x_1}\delta_\xi^k}^2_{L^2(\Sigma)}-\norm{\partial_{x_1}\delta_\xi^{k-1}}^2_{L^2(\Sigma)}+\norm{\partial_{x_1}(\delta_\xi^{k}-\delta_\xi^{k-1})}^2_{L^2(\Sigma)}\right)\\&+\underbrace{\tau\gamma_1\intS{ \partial_{x_1}(\delta_\xi^{k}-\delta_\xi^{k-1})\partial_{x_1}\Riesz(\xi^k-\PDt\eta^{k+1})}}_{T_3}\\
&+\frac{\gamma_2\tau}{2}\left(\norm{\Laph\delta_\xi^{k+1}}^2_{L^2(\Sigma)}-\norm{\Laph\delta_\xi^{k}}^2_{L^2(\Sigma)}+\norm{\Laph(\delta_\xi^{k}-\delta_\xi^{k-1})}^2_{L^2(\Sigma)}\right)\\&+\underbrace{\tau\gamma_2\intS{\Laph(\delta_\xi^{k}-\delta_\xi^{k-1})\Laph\Riesz(\xi^k-\PDt\eta^{k+1})}}_{T_4}.
\end{aligned}
\end{equation*}

By Young's inequality, \eqref{RieszE} and \eqref{edts2}, there holds
\begin{equation*}
T_3\gtrsim-\frac{\tau\gamma_1}{4}\norm{\partial_{x_1}(\delta_\xi^{k}-\delta_\xi^{k-1})}^2_{L^2(\Sigma)}-\tau^2\gamma_1\norm{\partial_t\partial_{x_1}\xi}^2_{L^2((t_k,t_{k+1})\times\Sigma)}.
\end{equation*}
Similarly,
\begin{equation*}
T_4\gtrsim-\frac{\tau\gamma_2}{4}\norm{\Laph(\delta_\xi^{k}-\delta_\xi^{k-1})}^2_{L^2(\Sigma)}-\tau^2\gamma_2\norm{\partial_t\partial_{x_1}^2\xi}^2_{L^2((t_k,t_{k+1})\times\Sigma)}.
\end{equation*}
Using Young's inequality, H\"older's inequality, \eqref{edts2} and \eqref{norm}, 
\begin{equation}\label{norm}
\norm{\Laph\eta}_{L^2(\Sigma)}\leq \norm{\partial_{x_1}^2\eta}_{L^2(\Sigma)}.
\end{equation}
we can get
\begin{equation*}
\begin{aligned}
\abs{ Q_{2k} }
&\leq\norm{\delta_\xi^{k}-\delta_\xi^{k-1}}_{L^2(\Sigma)}\norm{\gamma_1 \Riesz(\zeta^{k+1}-\zeta^{k})}_{L^2(\Sigma)}+\norm{\delta_\xi^{k}-\delta_\xi^{k-1}}_{L^2(\Sigma)}\norm{\gamma_2\Laph{\Riesz}(\zeta^{k+1}-\zeta^{k}))}_{L^2(\Sigma)}\\
&\leq\frac{\vrs}{4}\left(\norm{\delta_\xi^{k}}^2_{L^2(\Sigma)}+\norm{\delta_\xi^{k-1}}^2_{L^2(\Sigma)}\right) 
+\frac{2\gamma_1^2}{\vrs}\norm{\partial_{x_1}^2(\eta^{k+1}-\eta^k)}^2_{L^2(\Sigma)}+\frac{2\gamma_2^2}{\vrs}\norm{\Lapx(\zeta^{k+1}-\zeta^k)}^2_{L^2(\Sigma)}
\\&
\lesssim\frac{\vrs}{4}\left(\norm{\delta_\xi^{k}}^2_{L^2(\Sigma)}+\norm{\delta_\xi^{k-1}}^2_{L^2(\Sigma)}\right) +\frac{2\gamma_1^2\tau}{\vrs}\norm{\partial_{x_1}^2\xi}^2_{L^2((t_k,t_{k+1})\times\Sigma)}
+\frac{2\gamma_2^2\tau}{\vrs}\norm{\partial_{x_1}^4\xi}^2_{L^2((t_k,t_{k+1})\times\Sigma)}.
\end{aligned}
\end{equation*}
Similarly,
\begin{equation*}
\begin{aligned}
\abs{ Q_{3k} }
&\leq \frac{\tau^2}{2\vrs}\Big(\gamma_1^2\norm{\delta_\zeta^{k+1}}^2_{L^2(\Sigma)}+\gamma_2^2\norm{\Laph\delta_\zeta^{k+1}}^2_{L^2(\Sigma)}\Big)\\
&+\frac{1}{2\vrs}\Big(\gamma_1^2\norm{{\pdx \Riesz}(\eta^{k+1}-\eta^{k})}^2_{L^2(\Sigma)}+\gamma_2^2\norm{ {\pdx \Riesz}(\zeta^{k+1}-\zeta^{k})}^2_{L^2(\Sigma)}\Big)
\\&\lesssim \frac{\tau^2}{2\vrs}\left(\gamma_1^2\norm{\delta_\zeta^{k+1}}^2_{L^2(\Sigma)}+\gamma_2^2\norm{\Laph\delta_\zeta^{k+1}}^2_{L^2(\Sigma)}\right)\\
&+\frac{\gamma_1^2\tau}{2\vrs}\norm{\partial_{x_1}^2\xi}^2_{L^2((t_k,t_{k+1})\times\Sigma)}
+\frac{\gamma_2^2\tau}{2\vrs}\norm{\partial_{x_1}^4\xi}^2_{L^2((t_k,t_{k+1})\times\Sigma)}.
\end{aligned}
\end{equation*}

Hence, by collecting the above estimates, we get
\begin{equation*}
\begin{aligned}
G_{sL}&\gtrsim \frac{\tau^2\gamma_1^2}{2\vrs}\norm{\delta_\zeta^{m+1}}^2_{L^2(\Sigma)}+\summ\frac{\tau^2\gamma_1^2}{2\vrs}\norm{\delta_\zeta^{k+1}-\delta_\zeta^{k}}^2_{L^2(\Sigma)}+
\frac{\tau^2\gamma_1\gamma_2}{\vrs}\norm{\partial_{x_1}\delta_\zeta^{m+1}}^2_{L^2(\Sigma)}
\\&+\summ\frac{\tau^2\gamma_1\gamma_2}{\vrs}\norm{\partial_{x_1}(\delta_\zeta^{k+1}-\delta_\zeta^{k})}^2_{L^2(\Sigma)}
+\frac{\tau^2\gamma_2^2}{2\vrs}\norm{\Laph\delta_\zeta^{m+1}}^2_{L^2(\Sigma)}+\summ\frac{\tau^2\gamma_2^2}{2\vrs}\norm{\Laph(\delta_\zeta^{k+1}-\delta_\zeta^{k})}^2_{L^2(\Sigma)}
\\&
+\frac{\tau^2\gamma_1}{2}\norm{\partial_{x_1}\delta_\xi^{m+1}}^2_{L^2(\Sigma)}
+\summ\frac{\tau^2\gamma_1}{4}\norm{\partial_{x_1}(\delta_\xi^{k}-\delta_\xi^{k-1})}^2_{L^2(\Sigma)}+\frac{\tau^2\gamma_2}{2}\norm{\Laph\delta_\xi^{m+1}}^2_{L^2(\Sigma)}\\
&+\summ\frac{\tau^2\gamma_2}{4}\norm{\Laph(\delta_\xi^{k}-\delta_\xi^{k-1})}^2_{L^2(\Sigma)}
-\summ\frac{\tau\vrs}{2}\norm{\delta_\xi^{k}}^2_{L^2(\Sigma)}-\summ\frac{\tau^3\gamma_1^2}{2\vrs}\norm{\delta_\zeta^{k+1}}^2_{L^2(\Sigma)}
\\&-\summ\frac{\tau^3\gamma_2^2}{2\vrs}\norm{\Laph\delta_\zeta^{k+1}}^2_{L^2(\Sigma)}-\frac{5\gamma_1^2\tau^2}{2\vrs}\norm{\partial_{x_1}^2\xi}^2_{L^2((0,T)\times\Sigma)}
-\frac{5\gamma_2^2\tau^2}{2\vrs}\norm{\partial_{x_1}^4\xi}^2_{L^2((0,T)\times\Sigma)}\\
&-\tau^3\Big(\gamma_1\norm{\partial_t\partial_{x_1}\xi}^2_{L^2((0,T)\times\Sigma)}+\gamma_2\norm{\partial_t\partial_{x_1}^2\xi}^2_{L^2((0,T)\times\Sigma)}\Big)\\
&\gtrsim -\TS^2 +\frac{\tau^2\gamma_1^2}{2\vrs}\norm{\delta_\zeta^{m+1}}^2_{L^2(\Sigma)}+\frac{\tau^2\gamma_2^2}{2\vrs}\norm{\Laph\delta_\zeta^{m+1}}^2_{L^2(\Sigma)}-\TS \summ \delta_E^k\\
&-\summ\frac{\tau^3\gamma_1^2}{2\vrs}\norm{\delta_\zeta^{k+1}}^2_{L^2(\Sigma)}-\summ\frac{\tau^3\gamma_2^2}{2\vrs}\norm{\Laph\delta_\zeta^{k+1}}^2_{L^2(\Sigma)},
\end{aligned}
\end{equation*}
which proves \eqref{estimate}.
\end{proof}

\subsection{Proof of estimates \eqref{res}}\label{app_res}

\begin{proof}
Property \eqref{IM2} implies
\begin{equation*}
   \intS{\vrs \PDt I_\xi^k  \Delta_\xi^k}=0.
\end{equation*}
Consequently, the estimates for $G_f$, $G_s$, and $R_i$ ($i=1,\cdots,5$) coincide with those established in \cite[Appendix B.4]{SST}. We now proceed to analyze the remaining terms.
\paragraph{$R^k_6$-term} By Young's inequality and \eqref{edts1} we obtain
\begin{align*}
 \abs{\TS \summ R^k_6} & = \Abs{ \TS \summ \vrs\intS{ (\PDt \xi^k -\pdt \xi^k) (\delta_\xi^k +\frac{\TS^2}{\vrs} \Delta_\xi^k )} }
  \\&\aleq \frac{\TS^2}{4\vrs}  \norm{\pd_t^2 \xi }_{L^2((0,T)\times\Sigma)}^2 + \TS \summ \intS{\vrs \abs{\delta_\xi}^2}+\TS^3 \summ\intS{ \abs{ \TS \Delta_\xi^k}^2 } .
\end{align*}
\paragraph{$R^k_7$-term.} 

\begin{align*}
& \Abs{ \TS \summ R^k_7 } =  \Abs{ \TS \summ  \gamma_1 \intS{\pdx (\eta^{k+1} -\eta^k) \pdx  (\delta_\xi^k+ \frac{\TS^2}{\vrs}\Delta_\xi^k)} 
+  \TS \summ  \gamma_2 \intS{\pdx (\zeta^{k+1} -\zeta^k) \pdx (\delta_\xi^k+ \frac{\TS^2}{\vrs}\Delta_\xi^k)} }
\\
&\aleq \TS \summ \TS \Big(\norm{\pdx^2D_t\eta^k}_{L^2(\Sigma)}+\norm{\pdx^2D_t\zeta^k}_{L^2(\Sigma)}\Big)\norm{\delta_\xi^k+ \frac{\TS^2}{\vrs}\Delta_\xi^k}_{L^2(\Sigma)}
\\
&\aleq \TS \summ \norm{\delta_\xi^k}_{L^2(\Sigma)}^2 + \TS^3 \summ \norm{\TS \Delta_\xi^k}_{L^2(\Sigma)}^2+\TS^2(\norm{\pdx^4\xi}_{L^2((0,T)\times \Sigma)}^2+\norm{\pdx^2\xi}_{L^2((0,T)\times \Sigma)}^2).
\end{align*}
%

Note that 
\begin{multline*}
\intS{\abs{\TS \Delta_\xi^k}^2 }\lesssim \norm{\delta_\zeta^{k+1}-\delta_\zeta^k}_{L^2(\Sigma)}^2+\norm{\Riesz(\zeta^{k+1}-\zeta^k)}_{L^2(\Sigma)}^2+\norm{\Laph(\delta_\zeta^{k+1}-\delta_\zeta^k)}_{L^2(\Sigma)}^2 \\ +\norm{\Laph\Riesz(\zeta^{k+1}-\zeta^k)}_{L^2(\Sigma)}^2 
\lesssim \norm{\delta_\zeta^{k+1}-\delta_\zeta^k}_{L^2(\Sigma)}^2 +\norm{\Laph(\delta_\zeta^{k+1}-\delta_\zeta^k)}_{L^2(\Sigma)}^2 + \TS^2.
\end{multline*}
we have  
\begin{equation*}
  \TS^3 \summ \norm{\TS \Delta_\xi^k}_{L^2(\Sigma)}^2 \lesssim  
  \TS^3 \summ  \Big( \norm{\delta_\zeta^{k+1} }_{L^2(\Sigma)}^2+\norm{\Laph \delta_\zeta^{k+1}}_{L^2(\Sigma)}^2 \Big) +\TS^4.
\end{equation*}
Consequently, collecting all the above estimates we get 
\begin{multline*}
\Abs{ \TS \summ \sum_{i=1}^7 R^k_i + G_f + G_s} 
\aleq \TS^2 + h^2  
+ c \TS \summ \delta_E^k 
+ 2 \alpha  \mu   \TS \summ \intOref{\left| \Grad \delta_\vu^k (\Jacob_h^k)^{-1}\right|^2 \eta_h^k}
\\ +\TS^3 \summ  \Big( \norm{\delta_\zeta^{k+1} }_{L^2(\Sigma)}^2+\norm{\Laph \delta_\zeta^{k+1}}_{L^2(\Sigma)}^2 \Big).
\end{multline*}
which proves \eqref{res}.

\end{proof}

\end{document}